\def\draft{n}
\newtheorem{theorem}{Theorem}[section]
\theoremstyle{definition}
\newtheorem{proposition}[theorem]{Proposition}
\newtheorem{lemma}[theorem]{Lemma}
\newtheorem{definition}[theorem]{Definition}
\newtheorem{corollary}[theorem]{Corollary}
\newtheorem{conjecture}[theorem]{Conjecture}
\newtheorem{problem}[theorem]{Problem}
\def\printname#1{
\if\draft y
\smash{\makebox[0pt]{\hspace{-0.5in}
\raisebox{8pt}{\tt\tiny #1}}}
\fi
}
\newcommand{\psdraw}[2]
{\begin{array}{c} \hspace{-1.3mm}
\raisebox{-4pt}{\epsfig{figure=draws/#1.eps,width=#2}}
\hspace{-1.9mm}\end{array}}
\long\def\@makecaption#1#2{%
\vskip 10pt
\setbox\@tempboxa\hbox{
\small\sf{\bfcaptionfont #1. }\ignorespaces #2}%
\ifdim \wd\@tempboxa >\captionwidth {%
\rightskip=\@captionmargin\leftskip=\@captionmargin
\unhbox\@tempboxa\par}%
\else
\hbox to\hsize{\hfil\box\@tempboxa\hfil}%
\fi}
\font\bfcaptionfont=cmssbx10 scaled \magstephalf
\newdimen\@captionmargin\@captionmargin=2\parindent
\newdimen\captionwidth\captionwidth=\hsize
\def\lbl#1{\label{#1}\printname{#1}}
\def\BN{\mathbb N}
\def\BZ{\mathbb Z}
\def\BQ{\mathbb Q}
\def\BR{\mathbb R}
\def\BC{\mathbb C}
\def\D{\Delta}
\def\calI{\mathcal I}
\def\calE{\mathcal{E}}
\def\a{\alpha}
\def\La{\Lambda}
\def\l{\lambda}
\def\Ga{\Gamma}
\def\ga{\gamma}
\def\la{\langle}
\def\ra{\rangle}
\def\e{\epsilon}
\def\Ga{\Gamma}
\def\d{\delta}
\def\b{\beta}
\def\th{\theta}
\def\Th{\Theta}
\def\longto{\longrightarrow}
\def\ft{\mathfrak{t}}
\def\calC{\mathcal{C}}
\def\={\;=\;} 
\def\+{\,+\,}
\begin{document}
\title[Asymptotics of classical spin networks]{
Asymptotics of classical spin networks}
\author{Stavros Garoufalidis}
\address{School of Mathematics \\
Georgia Institute of Technology \\
Atlanta, GA 30332-0160, USA \newline 
{\tt \url{http://www.math.gatech.edu/~stavros}}}
\email{stavros@math.gatech.edu}
\author{Roland van der Veen}
\address{Department of Mathematics \\
University of California, Berkeley \\
Berkeley, CA 94720-3840, USA 
\newline 
{\tt \url{http://www.math.berkeley.edu/~roland}}}
\email{roland@math.berkeley.edu}
\thanks{S.G. was supported in part by NSF. R.V. was supported in part by 
the Netherlands Organisation for Scientific Research (NWO). \\
\newline
1991 {\em Mathematics Classification.} Primary 57N10. Secondary 57M25.
\newline
{\em Key words and phrases: Spin networks, ribbon graphs, recoupling, 
$6j$-symbols, Racah coefficients, angular momentum, asymptotics, 
$G$-functions, Kauffman bracket, Jones polynomial, Wilf-Zeilberger method,
enumerative combinatorics, Nilsson. 
}
}
\date{December 19, 2011}
\dedicatory{\rm{With an appendix by Don Zagier}}
\begin{abstract}
A spin network is a cubic ribbon graph labeled by representations of 
$\mathrm{SU}(2)$. Spin networks are important in various areas of 
Mathematics (3-dimensional Quantum Topology), 
Physics (Angular Momentum, Classical and Quantum Gravity) 
and Chemistry (Atomic Spectroscopy).
The evaluation of a spin network is an integer number. 
The main results of our paper are: 
(a) an existence theorem for the asymptotics of evaluations of
arbitrary spin networks (using the theory of $G$-functions), 
(b) a rationality property of the generating
series of all evaluations with a fixed underlying graph
(using the combinatorics of the chromatic evaluation of a spin network), 
(c) rigorous effective computations of our results for some $6j$-symbols 
using the Wilf-Zeilberger theory, and (d) a complete analysis of 
the regular Cube $12j$ spin network (including a non-rigorous guess
of its Stokes constants), in the appendix.
\end{abstract}
\maketitle
\tableofcontents

\section{Introduction}
\lbl{sec.intro}

\subsection{Spin networks in mathematics, physics and chemistry}
\lbl{sub.history}
A (classical) {\em spin network} $(\Ga,\ga)$ consists of a {\em cubic ribbon
graph} $\Ga$ (i.e., an abstract trivalent graph with a cyclic ordering of the 
edges at each vertex) and a coloring $\ga$ of its set of edges by natural 
numbers. According to Penrose, spin networks correspond to a diagrammatic 
description of tensors of representations of $\mathrm{SU}(2)$. Here a color 
$k$ on an edge indicates the $k+1$ dimensional irreducible representation of 
$\mathrm{SU}(2)$, and their evaluation is a contraction of the above tensors. 
Spin networks originated in work by Racah and Wigner in atomic spectroscopy 
in the late forties \cite{Ra1,Ra2,Ra3,Ra4,Wi}. Exact or asymptotic evaluations 
of spin network is a useful and interesting topic studied by Ponzano-Regge, 
Biedenharn-Louck and many others; see \cite{BL1,BL2,PR,VMK}.
In the past three decades, spin networks have been used in relation to
classical and quantum gravity and angular momentum 
in 3-dimensions; see \cite{EPR,Pe1,Pe2,RS}. In mathematics, 
$q$-deformations of spin networks (so called quantum
spin networks) appeared in the eighties in the work of 
Kirillov-Reshetikhin \cite{KR}. {\em Quantum spin networks} are {\em knotted
framed trivalent graphs} embedded in 3-space with a cyclic ordering of 
the edges near every vertex, and their evaluations are rational functions
of a variable $q$. The quantum theta and $6j$-symbols 
are the building blocks for topological invariants of closed
3-manifolds in the work of Turaev-Viro \cite{TV,Tu}. Quantum spin networks
are closely related to a famous invariant of knotted 3-dimensional objects,
the celebrated {\em Jones polynomial}, \cite{J}. A thorough discussion of 
quantum spin networks and their relation to the Jones polynomial and the 
{\em Kauffman bracket} is given in \cite{KL} and \cite{CFS}.
Recent papers on asymptotics of spin networks in physics and mathematics 
include: \cite{AHHJLY10}, \cite{LY11} and \cite{CM}.
Aside from the appearances of spin networks in the above mentioned areas,
their evaluations and their asymptotics lead to challenging questions
even for simple networks such as the cube, discussed in detail in
the appendix. Some examples of spin networks that will be discussed in the 
paper are shown in Figure \ref{fig.exspin}.
\begin{figure}[htpb]
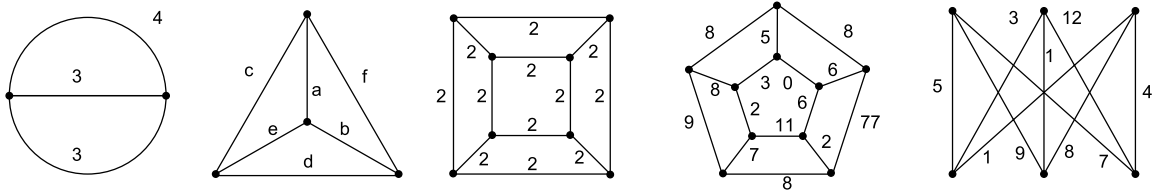

$$
\psdraw{examplespinnetworks2}{6in}
$$
\caption{From left to right: The theta, the tetrahedron or 
$6j$-symbol, the Cube, the $5$-sided prism and the complete bipartite graph 
$K_{3,3}$ or $9j$-symbol. The cyclic order of the edges around each vertex
is counterclockwise. The left three spin networks are admissible, and
the right two are not.}\lbl{fig.exspin}
\end{figure}

\subsection{The evaluation of a spin network}
\lbl{sub.standardev}
\begin{definition}
\lbl{def.evalP}
\rm{(a)} We say a spin network is {\em admissible} when the sum of the 
three labels $a,b,c$ around every vertex is even and $a,b,c$ satisfy the 
triangle inequalities: $|a-b|\leq c\leq a+b$. 
\newline
\rm{(b)}
The Penrose evaluation $\la \Ga,\ga\ra^P$ of a spin network $(\Ga,\ga)$
is defined to be zero if it is not admissible. If it is admissible,
its evaluation is given by the following algorithm. 
\begin{itemize}
\item
Use the cyclic ordering to thicken the vertices into disks and the 
edges into untwisted bands. 
\item
Replace the vertices and edges by the linear combinations of arcs as 
follows:
\begin{equation}
\lbl{fig.evaluation}
\psdraw{evalnew}{5in}
\end{equation}
\item
Finally 
the resulting linear combination of closed loops is evaluated by assigning 
the value $(-2)^n$ to a term containing $n$ loops. 
\end{itemize}
\end{definition}
In the above definition the summation is over all permutations $\sigma$ of 
the $a$ arcs at an edge colored $a$. The Penrose evaluation $\la \Ga,\ga\ra^P$ 
of a spin network is always an integer.
Note that the admissibility condition is equivalent to saying that the 
strands can be connected at each vertex as in Figure \ref{fig.evaluation}. 
Note also that cubic ribbon graphs $\Ga$ are allowed to have multiple edges, 
loops and several connected components including 
components that contain no vertices. In addition, $\Ga$ is allowed to
be non-planar (contrary to the requirement of many authors \cite{We,Mou,KL}),
as long as one fixes a cyclic ordering of the edges at each vertex. The latter
condition is implicit in \cite{Pe1}. It turns out that changing the cyclic
ordering at a vertex of a spin network changes its evaluation by single sign;
see Lemma \ref{lem.evaluation} below.

\subsection{Three fundamental problems}
\lbl{sub.problem}
It is easy to see that if $(\Ga,\ga)$ is an admissible spin network and 
$n$ is a natural number, then $(\Ga,n \ga)$ is also admissible. A 
fundamental problem is to study the asymptotic behavior of the sequence of 
evaluations $\la\Ga,n\ga\ra^P$ when $n$ is large. This problem actually 
consists of separate parts. Fix an admissible spin network $(\Ga,\ga)$.
\begin{problem}
\lbl{prob.key1}
Prove the existence of an asymptotic expansion of the sequence
$\la \Ga, n\ga\ra^P $ when $n$ is large.
\end{problem}
\begin{problem}
\lbl{prob.key2}
Compute the asymptotic expansion of the sequence $\la \Ga, n\ga\ra^P $
to all orders in $n$ effectively.
\end{problem}
\begin{problem}
\lbl{prob.key3}
Identify the terms in the asymptotic expansion of $\la \Ga, n\ga\ra^P $
with geometric invariants of the spin network. 
\end{problem}
These problems are motivated by the belief that the quantum 
mechanics of particles with large spin will approximate the classical theory.
To the best of our knowledge, the literature for Problem \ref{prob.key1} 
is relatively new and short and concerns only thetas and $6j$-symbols with 
certain labellings. For Problem \ref{prob.key2}, it should be noted that even 
for the $6j$-symbols not much is known about the subleading terms in the 
asymptotic expansion. Some terms are found in \cite{DL9} but no general 
algorithm is given. As for the geometric interpretation in Problem 
\ref{prob.key3} again there is a well known conjecture in the case of the 
$6j$-symbol \cite{PR}. Roberts used geometric quantization techniques to 
prove this conjecture on the leading asymptotic behavior of $6j$-symbols 
in the so-called Euclidean case \cite{Rb1,Rb2}. 
Some results on the $9j$-symbol have been found in \cite{HL10}. Finally 
more general interpretation for the leading order asymptotics appears in 
\cite{CM} however this assumes a hypothesis that has not been shown to 
hold in cases other than the $6j$-symbol.
Problems \ref{prob.key1}--\ref{prob.key3} can also be viewed as the classical
analogue of the problem of understanding the asymptotics of quantum 
spin networks and quantum invariants. Even less is known in the quantum 
case but see \cite{GV} and a well known conjecture in this context is the 
volume conjecture \cite{Ka},\cite{MM}. 

\subsection{A solution to Problem \ref{prob.key1}}
\lbl{sub.results}
In this paper we give a complete solution to Problem 
\ref{prob.key1} in full generality. A convenient role is played by the 
following normalization of the spin network evaluation. This normalization 
was introduced independently in \cite{C9} in the $q$-deformed case.
\begin{definition}
\lbl{def.eval}
We define the {\em standard normalization} of a spin network evaluation to be 
\begin{equation}
\lbl{eq.evalstandard} 
\la\Ga,\ga\ra = \frac{1}{\calI!}\la\Ga,\ga\ra^P
\end{equation}
where $\mathcal{I!}$ is defined to be the product
\begin{equation}
\lbl{eq.evI}
\mathcal{I}! = 
\prod_{v\in V(\Ga)} 
\left(\frac{-a_v+b_v+c_v}{2}\right)!
\left(\frac{a_v-b_v+c_v}{2}\right)!
\left(\frac{a_v+b_v-c_v}{2}\right)!
\end{equation}
where $a_v, b_v,c_v$ are the colors of the edges emanating from vertex $v$,
and $V(\Ga)$ is the set of vertices of $\Ga$. 
\end{definition} 
The standard normalization has a number of useful properties (see Theorem 
\ref{thm.Eval} below) that can be stated conveniently in terms of a 
generating function that we now define. If we fix a cubic ribbon graph 
$\Ga$ one can consider many spin network evaluations, one for each 
admissible labeling $\ga$ of $\Ga$. We organize these in a generating 
function by taking a formal variable for every edge and encoding $\ga$ 
in the exponents of monomials in these variables.
\begin{definition} 
\lbl{def.Generating}
Given a cubic ribbon graph $\Ga$ define a formal power series in the 
variables $z= (z_e)_{e\in E(\Ga)}$ by
\[F_\Ga(z) = \sum_{\ga\geq 0} \la\Ga,\ga\ra z^\ga \] 
where $z^{\ga}=\prod_{e \in E(\Ga)} z_e^{\ga(e)}$, and $E(\Ga)$ denote the set
of edges of $\Ga$.
\end{definition}
By virtue of our use of the standard normalization we can prove the 
following theorem about our generating function $F_\Ga$. 
\begin{theorem}
\lbl{thm.Eval}
\begin{enumerate}
\item{
For all spin networks $(\Ga,\ga)$, the standard evaluation 
$\la \Ga,\ga \ra$ is an integer.}
\item{
The sequence $\la \Ga, n \ga \ra$ is exponentially bounded.}
\item{ For any cubic ribbon graph $\Ga$ the generating series $F_\Ga$ 
is rational function explicitly defined in terms of $\Ga$.
}
\end{enumerate}
\end{theorem}
To illustrate the last part of the theorem let us mention the special 
case in which $\Ga$ is planar with the counterclockwise orientation.
In this case a result from \cite{We} that states that 
$$
F_{\Ga}(z)= \frac{1}{P_{\Ga}(z)^2}
$$
where $P_\Ga(z) = \sum_{c\in C_\Ga}z^c$ and $C_\Ga$ is the set of
$2$-regular subgraphs of $\Ga$.
Our theorem generalizes this result to arbitrary $\Ga$, the precise 
statement can be found in Theorem \ref{thm.GF}.
The next result gives a complete answer to Problem \ref{prob.key1}.
To state it, we need to recall a useful type of sequence; see 
\cite{Ga4,Gawhat}.
\begin{definition}
\lbl{def.nilsson}
We say that a sequence $(a_n)$ is of 
{\em Nilsson type} if it has an {\em asymptotic expansion} of the form 
\begin{equation}
\lbl{eq.nilsson}
a_n \sim \sum_{\l,\a,\b} \l^{n} n^{\a} (\log n)^{\b} S_{\l,\a,\b}
h_{\l,\a,\b}(1/n)
\end{equation}
where 
\begin{itemize}
\item
the summation is over a finite set of triples $(\l,\a,\b)$, 
\item
the {\em growth rates} $\l$ are algebraic numbers of equal magnitude,
\item
the {\em exponents} $\a$ are rational and the {\em nilpotency exponents} 
$\b$ are natural numbers,
\item
the {\em Stokes constants} $S_{\l,\a,\b}$ are complex numbers,
\item
the $h_{\l,\a,\b}(x)$ are formal power series with coefficients in a number field $K$ such that
the coefficient of $x^n$ is bounded by $C^n n!$ for some $C>0$ and the constant coefficient is $1$.
\end{itemize}
\end{definition}
Note that the set of sequences of Nilsson type is closed under addition
and pointwise multiplication.  Using the theory of $G$-functions, 
(discussed in Section \ref{sub.Gfunction}), we prove:
\begin{theorem}
\lbl{thm.Nilsson}
For any spin network $(\Ga,\ga)$ the sequence $\la \Ga, n\ga \ra$ is of 
Nilsson type. 
\end{theorem}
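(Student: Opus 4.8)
The plan is to realize $\la\Ga,n\ga\ra$ as (up to an elementary prefactor) the diagonal specialization of a nice multisum with hypergeometric summand, and then invoke the theory of $G$-functions. More precisely, using the recoupling/state-sum description of spin network evaluations in terms of $3j$ and $6j$-symbols (Section \ref{sub.recoupling}, or the chromatic/Penrose algorithm of Definition \ref{def.evalP}), I would first write $\la\Ga,\ga\ra^P$, and hence the standard normalization $\la\Ga,\ga\ra$, as a finite sum
$$
\la\Ga,\ga\ra = \sum_{k \in P(\ga) \cap \BZ^r} t(\ga,k)
$$
where $P(\ga)$ is a polytope whose defining inequalities are linear in the colors $\ga$, and $t(\ga,k)$ is a \emph{balanced hypergeometric term} in all the variables — i.e. a ratio of factorials of integer-affine forms in $(\ga,k)$, with the balancing condition that the forms in the numerator and denominator sum to the same linear form. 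This is exactly the structure produced by the Penrose evaluation after one expands each vertex and edge and collects closed loops, and it is the structure exhibited classically for the $6j$-symbol (Racah's single-sum formula). Crucially, every factorial appearing after the $1/\calI!$ normalization has a nonnegative integer argument on the summation polytope, so $\la\Ga,\ga\ra \in \BZ$ (this is Theorem \ref{thm.GF}) and the term has no spurious poles.

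Next I would substitute $\ga \mapsto n\ga$ and set $a_n = \la\Ga,n\ga\ra$. Then $a_n = \sum_{k \in nP(\ga)\cap\BZ^r} t(n\ga,k)$, which after the change of variables is a \emph{multidimensional balanced hypergeometric sum} with the single scaling parameter $n$. The key input is the theorem (from the theory of $G$-functions and holonomic sequences — Garoufalidis, "G-functions and multisum versus holonomic sequences", building on André, Katz, and the Nilsson–Gevrey asymptotics of Katz/André/Malgrange) that such a sequence is a $G$-function evaluated at an algebraic point, equivalently that its generating series $\sum_n a_n x^n$ is a $G$-function; and that the Taylor coefficients of any $G$-function (more generally, the solutions of a Fuchsian ODE with quasi-unipotent local monodromies and rational exponents at all singularities, including $\infty$) form a sequence of Nilsson type in the precise sense of Definition \ref{def.nilsson}. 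Concretely: (i) balanced hypergeometric multisums are $G$-functions — this is where the balancing condition and the integrality of the arguments are used, via the Cauchy/Lindemann bound on the summand together with Wilf–Zeilberger creative telescoping, which shows $a_n$ satisfies a linear recursion with polynomial coefficients of $G$-function type; (ii) $G$-functions satisfy $G$-operators, which are Fuchsian with rational exponents and quasi-unipotent monodromy (André's theorem); (iii) the asymptotics of coefficients of such ODE solutions are governed, via a transseries/Birkhoff normal-form analysis at $x = 0$ turned into an analysis at the finite singularities and at $\infty$, by exactly the shape $\sum \l^n n^\a (\log n)^\b S\, h(1/n)$ with $h$ Gevrey-$1$ and coefficients in a number field. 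Assembling these gives that $a_n$ is of Nilsson type.

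The main obstacle is step (i): verifying that the Penrose evaluation of an \emph{arbitrary} cubic ribbon graph, after the $\calI!$ normalization, really is a balanced hypergeometric multisum with all factorial arguments nonnegative on the summation polytope. For the theta and tetrahedron this is classical, but for a general graph one must organize the contraction of the $\mathrm{SU}(2)$ tensors (or equivalently the closed-loop count in Figure \ref{fig.evaluation}) into such a form uniformly in $\Ga$; I expect to do this by induction on the number of vertices via the recoupling moves, tracking that each move either preserves the balanced-hypergeometric form or introduces a new summation variable and a new balanced block, and that the normalizing factorials are precisely what cancels the potentially negative contributions. Once the multisum is in hand, the remaining steps are citations of the $G$-function machinery rather than new arguments, though one should be careful to note that the WZ/holonomicity argument only gives the \emph{recursion}, and it is the theory of $G$-functions (not mere holonomicity, which does not by itself control $\l$ being algebraic or the $\mathrm{Gevrey}$-$1$ and number-field properties) that upgrades this to a full Nilsson-type expansion.
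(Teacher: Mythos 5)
Your proposal follows essentially the same route as the paper: Lemma \ref{lem.fGc} uses exactly the recoupling reduction to $6j$-, $3j$- and $1j$-symbols (each a balanced hypergeometric block) to write $\la\Ga,n\ga\ra$ as a multi-dimensional sum of a balanced term over $nP_{\ft}\cap\BZ^r$, and the conclusion then follows by citing \cite[Thm.2]{Ga4} that such sums have $G$-function generating series whose Taylor coefficients are of Nilsson type. The only cosmetic difference is that the integrality $\la\Ga,\ga\ra\in\BZ$ is actually proved in the paper by the chromatic method of Section \ref{sec.chromatic}, not by nonnegativity of factorial arguments, but this plays no role in the proof of Theorem \ref{thm.existence}.
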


\subsection{A partial solution to Problem \ref{prob.key2}}
\lbl{sub.results2}
Regarding Problem \ref{prob.key2}, we introduce a new method 
(the Wilf-Zeilberger theory) which 
\begin{itemize}
\item
computes a linear recursion for the sequence $\la \Ga,n \ga\ra$, 
\item
given a linear recursion, effectively computes the corresponding triples 
$(\l,\a,\b)$, the number field $K$ and any number of terms of the power 
series $h_{\l,\a,b}(x) \in 1+xK[[x]]$ in Definition \ref{def.nilsson}, 
\item
numerically computes the Stokes constants $S_{\l,\a,\b}$.
\end{itemize}
Given this information, one may guess exact values of the Stokes constants.
In some cases, we obtain alternative exact computation of the Stokes
constants, too.
As an illustration of the theorem we will present computations of the 
asymptotic expansions of three representative $6j$-symbols up to high order
using the Wilf-Zeilberger method in Section \ref{sub.tet}. In the appendix 
we will present additional numerical results on the asymptotic expansion of 
the case of the cube spin network. About 20 more examples of spin network
evaluations (including the $s$-sided prisms for $s=2,\dots,7$ and the
twisted $s$-sided prisms for $s=2,\dots,5$) have been computed, and the
data is available from the first author upon request. 

\subsection{A conjecture regarding Problem \ref{prob.key3}}
\lbl{sub.results3}
The example of the cube spin network also 
provides evidence for the following conjecture on the growth rates $\l$ in 
the Nilsson type expansion. The conjecture connects the growth rates of 
suitable spin networks to the total mean curvature of a related Euclidean 
polyhedron. Let $P$ be a convex polyhedron in three dimensional Euclidean 
space. Denote by $M(P)$ the total mean curvature of $P$. Recall that 
$M(P) = \frac{1}{2}\sum_{e}\ell_e \phi_e$, where $\phi_e$ is the exterior 
dihedral angle at edge $e$ and $\ell_e$ is the length of the edge. 
\begin{conjecture}
\lbl{conj.Growth}
Let $(\Ga,\ga)$ be a planar spin network such that the dual of $\Ga$ is 
realized as the $1$-skeleton of a convex Euclidean polyhedron $P$ with 
edge lengths given by $\ga$. The numbers $e^{\pm iM(P)}$ are growth rates in 
the asymptotic expansion of the unitary evaluations of $\la\Ga,n\ga\ra^U$.
\end{conjecture}
In the conjecture we are using the so called unitary evaluation 
of a spin network defined in Section \ref{sec.examples}. This evaluation 
differs from the standard one by an explicit factor ensuring that it is 
still of Nilsson type.

After this work was completed, an approach to Problems 
\ref{prob.key1}--\ref{prob.key3} was proposed by Costantino-Marche, 
\cite{CM} using generating functions. Their approach requires certain 
nondegeneracy conditions, and in particular does not give a solution to 
Problem \ref{prob.key1} or Problem \ref{prob.key2} for the regular cube 
spin network, see the Appendix.

\subsection{Acknowledgement}
The results were conceived in a workshop in Aarhus, Denmark, and 
presented in HaNoi, Vietnam and Strasbourg, France in the 
summer of 2007. The authors wish to thank the organizers for their 
hospitality. S.G. wishes to thank C. Koutschan, D. Zeilberger and D. Zagier 
for many enlightening conversations. 

\section{Evaluation of spin networks}
\lbl{sec.eval}
In this section we treat two ways of calculating the evaluation of a spin 
network. The first is by recoupling theory and leads to practical but 
non-canonical formulas for the evaluations as multi-sums. The second way 
is the method of chromatic evaluation. This leads to the proof of the 
generating function result, Theorem \ref{thm.Eval} announced above. 

We start by recording some elementary facts about spin network evaluations. 
First of all our definition of the standard evaluation assumes that there 
are no edges without vertices. By definition we will add an $(a,a,0)$ 
colored vertex to any $a$-colored component that has none. This makes 
sense because of Part (a) of the following.
\begin{lemma}
\lbl{lem.evaluation}
Let $(\Ga, \ga)$ be a spin network and consider the standard evaluation. 
\begin{enumerate}
\item[(a)]{Inserting a vertex colored $(0,a,a)$ in the interior of an edge of 
$\Ga$ colored $a$ does not change the standard evaluation of the spin network.}
\item[(b)]{Changing the cyclic ordering at a vertex whose edges are colored 
$a$, $b$, $c$ changes the evaluation by a sign 
$(-1)^{(a(a-1)+b(b-1)+c(c-1))/2}$}
\end{enumerate}
\end{lemma}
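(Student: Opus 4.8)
The plan is to work directly with the Penrose evaluation algorithm of Definition \ref{def.evalP}, since both statements are local: they involve a single edge (part (a)) or a single vertex together with its three incident edge-ends (part (b)). For part (a), I would recall that the vertex projector for a triple $(0,a,a)$ is, in the Temperley–Lieb / Jones–Wenzl picture, simply the identity: the strand of color $0$ carries no arcs, and the admissibility internal structure at a $(0,a,a)$ vertex forces the $a$ strands on the two sides to be connected straight through by the $a$-strand identity tangle. So after thickening and replacing vertices and edges by their linear combinations of arcs as in Figure \ref{fig.evaluation}, inserting such a vertex in the middle of an $a$-colored edge produces exactly the same linear combination of closed loops as before; the loop count, and hence the value $(-2)^n$ summed over terms, is unchanged. (One should note this also requires that inserting the new vertex does not change admissibility, which is clear since $|a-0|\le a\le a+0$ always holds.)

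For part (b), the key point is to isolate the effect of a cyclic-order change at one vertex on the loop-counting. Thickening the vertex into a disk with three bands attached in a given cyclic order, the Penrose replacement fills the disk with a specific linear combination of planar matchings of the $a+b+c$ strand-endpoints; the admissible matching pairs the $a$-band with itself plus crossings, etc., and the standard fact (see \cite{KL,Pe1}) is that reversing the cyclic order of the three bands at the vertex replaces this internal tangle by its mirror, which amounts to introducing a half-twist on each of the three bands. Each band colored $x$ carries $x$ parallel strands, and inserting a half-twist (a curl) on an $x$-strand band multiplies the whole Kauffman-bracket-style evaluation by the framing/twist eigenvalue of the Jones–Wenzl projector $f_x$. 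With the convention that a single loop evaluates to $(-2)$, this twist eigenvalue is $(-1)^{x}(-2)^{?}$—more precisely, unwinding $x$ strands through a single full twist of the band costs the factor $(-1)^{x(x-1)/2}$ coming from resolving the $\binom{x}{2}$ crossings among the strands together with the $x$ framing curls, each curl on a single strand contributing $(-1)$ which is absorbed into the standard normalization. Collecting the contributions of the three bands colored $a$, $b$, $c$ gives precisely the sign $(-1)^{(a(a-1)+b(b-1)+c(c-1))/2}$.

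So the two concrete computations I would carry out are: (i) the twist/writhe eigenvalue of the Jones–Wenzl projector $f_x$ in the normalization where an unknot is worth $(-2)$ — this is the classical specialization at the relevant root-of-unity-free value of the formula $f_x$ twists by $(-1)^x A^{x^2+2x}$ of \cite{KL}, which here becomes exactly $(-1)^{x(x-1)/2}$ — and (ii) the observation that a cyclic-order reversal at a trivalent vertex equals the composition of three band half-twists (equivalently a single full twist once one accounts for orientation of the disk), so the signs multiply. The main obstacle I expect is purely bookkeeping: pinning down the exact power of $A$ / sign in the twist eigenvalue under this paper's somewhat nonstandard $(-2)$-normalization, and making sure the "half-twist vs. full-twist" count is right so that no spurious factor of $2$ appears in the exponent; once the single-band twist factor is correctly identified as $(-1)^{x(x-1)/2}$, part (b) is immediate by multiplicativity, and part (a) is the degenerate case $x=0$ of the same circle of ideas (no twist, no crossings, identity tangle).
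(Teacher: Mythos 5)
Your part (b) is, underneath the Kauffman--Lins packaging, the same argument as the paper's: the cyclic-order change forces a half-twist on each band, a half-twist on a band of $x$ arcs is the order-reversing permutation, and precomposing the antisymmetrizer on that edge with a permutation of sign $\epsilon$ multiplies the evaluation by $\epsilon$; the order-reversal has sign $(-1)^{x(x-1)/2}$, and the three bands multiply. Be careful, though, with your citation of the twist eigenvalue $(-1)^xA^{x^2+2x}$ of the Jones--Wenzl projector: at $A=-1$ that expression equals $+1$ for every $x$ (it is the \emph{full}-twist coefficient, in which the two order-reversals cancel), so it does not specialize to $(-1)^{x(x-1)/2}$ and cannot be the source of the sign. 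The correct computation is the one you give directly afterwards: resolving the $\binom{x}{2}$ crossings of the order-reversal, each worth $-1$ in the Penrose calculus. There are also no framing curls to "absorb" in this classical, unframed setting.

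Part (a) has a genuine gap. The evaluation in question is the normalized one $\la\Ga,\ga\ra=\la\Ga,\ga\ra^P/\mathcal{I}!$, and in the Penrose algorithm each \emph{edge} is replaced by the bare antisymmetrizer $\sum_{\sigma\in S_a}\mathrm{sign}(\sigma)\,\sigma$, which is \emph{not} idempotent. Inserting a vertex colored $(0,a,a)$ splits one $a$-colored edge into two, each carrying its own antisymmetrizer, so the resulting linear combination of loop diagrams is not "exactly the same as before": by $\sum_{\sigma}\sum_{\tau}\mathrm{sign}(\sigma\tau)\,\sigma\tau=a!\sum_{\sigma}\mathrm{sign}(\sigma)\,\sigma$, the unnormalized evaluation $\la\cdot\ra^P$ is multiplied by $a!$. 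Your conclusion only follows after observing that the new vertex contributes the factor $\left(\frac{-0+a+a}{2}\right)!\left(\frac{0-a+a}{2}\right)!\left(\frac{0+a-a}{2}\right)!=a!$ to $\mathcal{I}!$, which exactly cancels this. Your appeal to the Temperley--Lieb picture, where the edge element is the idempotent $f_a$, silently changes the normalization and is what hides the missing factor; as written, the claim that the loop count and hence the value is unchanged at the level of the algorithm of Definition \ref{def.evalP} is false.
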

\begin{proof}
(a) The chosen normalization introduces an extra factor $1/a!$ for the new 
vertex labeled $(0,a,a)$, while it follows from the definition that one also 
inserts an extraneous summation over permutations in the pre-existing edge 
labeled $a$. Since 
$$
\sum_{\sigma \in S_a}
\sum_{\tau \in S_a}\mathrm{sign}(\sigma)\sigma\mathrm{sign}(\tau)\tau 
= a!\sum_{\sigma\in S_a}\mathrm{sign}(\sigma)\sigma
$$ 
the evaluation is unchanged.
\newline
(b) Changing the cyclic order at a vertex with edge labels $a,b,c$ has the 
following effect. The alternating sum at each of the adjacent edges is 
multiplied by the 
permutation that turns the arcs in the edge by $180$ degrees. This element has 
sign $a(a-1)/2$ in $S_a$.
\end{proof}
As a consequence of part (a) of the above lemma, an edge labeled $0$ in a 
spin network can be removed without affecting the evaluation. 
There is an alternative {\em bracket normalization} $\la \Ga, \ga \ra^B$
of the evaluation of a spin network $(\Ga,\ga)$ which agrees with a
specialization of the {\em Jones polynomial} or {\em Kauffman bracket}. 
\begin{definition}
\lbl{def.evalB}
The {\em bracket normalization} of a spin network $(\Ga,\ga)$ is defined by
\begin{equation}
\lbl{eq.evalB} 
\la\Ga,\ga\ra^B = \frac{1}{\calE!}\la\Ga,\ga\ra^P
\end{equation}
where 
\begin{equation}
\lbl{eq.evE}
\mathcal{E}! = 
\prod_{e \in E(\Ga)} \ga(e)! 
\end{equation}
This normalization has the property that it coincides 
with the Kauffman bracket (Jones polynomial) of a quantum spin network 
evaluated at $A = -1$ \cite{KL}. However, $\la \Ga, \ga \ra^B$ is not
necessarily an integer number, and the analogous generating series does
not satisfy the rationality property of Theorem \ref{thm.Eval}.
\end{definition}

\subsection{Evaluation of spin networks by recoupling}
\lbl{sub.recoupling}
In this subsection we describe a way of evaluating spin networks by 
recoupling. We will reduce the evaluation of spin networks to 
multi-dimensional sums of $6j$ and theta-symbols. The value of the $6j$ and 
theta-symbols is given by the
following lemma of \cite{KL} and \cite{We}, using our normalization. 
The choice of letters in labeling the $6j$-symbol is traditional following 
for example \cite{KL}.
\begin{lemma}
\lbl{lem.6j3jvalue}
\rm{(a)}
Let $(\psdraw{tetra}{0.18in},\ga)$ denote a tetrahedron labeled and 
oriented as in Figure \ref{fig.exspin} with 
$\ga=(a,b,c,d,e,f)$. 
Its standard evaluation is given by
\begin{equation}
\lbl{eq.6jsum}
\la \psdraw{tetra}{0.18in}, \ga \ra =
\sum_{k = \max T_i}^{\min S_j} (-1)^{k}(k+1)\binom{k}{
S_1-k , S_2-k , S_3-k , k- T_1 , k- T_2 , k- T_3 , k- T_4}
\end{equation} 
where, as usual
$$
\binom{a}{a_1, a_2, \dots, a_r}=\frac{a!}{a_1! \dots a_r!}
$$
denotes the multinomial coefficient when $a_1+ \dots a_r=a$, and
$S_i$ are the half sums of the labels in the three quadrangular curves in 
the tetrahedron 
and $T_j$ are the half sums of the thee edges emanating from a given vertex. 
In other words, the $S_i$ and $T_j$ are given by
\begin{equation}
\lbl{eq.Si}
S_1 = \frac{1}{2}(a+d+b+c)\qquad S_2 = \frac{1}{2}(a+d+e+f) 
\qquad S_3 = \frac{1}{2}(b+c+e+f)
\end{equation}
\begin{equation}
\lbl{eq.Tj}
T_1 = \frac{1}{2}(a+b+e) \qquad T_2 = \frac{1}{2}(a+c+f)
\qquad T_3 = \frac{1}{2}(c+d+e) \qquad T_4 = \frac{1}{2}(b+d+f).
\end{equation}
\rm{(b)} Let $(\Theta,\ga)$ denote the $\Theta$ spin network of Figure 
\ref{fig.exspin} admissibly colored by $\ga=(a,b,c)$.
Then we have
\begin{equation}
\lbl{eq.3jsum}
\la \Theta, \ga \ra 
= (-1)^{\frac{a+b+c}{2}}\left(\frac{a+b+c}{2}+1\right)
\binom{\frac{a+b+c}{2}}{\frac{-a+b+c}{2}, \frac{a-b+c}{2}, \frac{a+b-c}{2}}.
\end{equation}
\end{lemma}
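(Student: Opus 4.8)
The plan is to derive both formulas \eqref{eq.6jsum} and \eqref{eq.3jsum} directly from Definition \ref{def.evalP}, running the Penrose evaluation algorithm on the thickened ribbon graph and then dividing by $\calI!$ as in Definition \ref{def.eval}. The key combinatorial fact is that after thickening each vertex into a disk and each edge of color $a$ into a band carrying $a$ parallel strands, the vertex replacement of Figure \ref{fig.evaluation} produces, at each vertex, an antisymmetrizer (Jones--Wenzl--type idempotent at $A=-1$) acting on the incident strands, and the edge replacement similarly antisymmetrizes along each band. Thus $\la\Ga,\ga\ra^P$ is a signed count of closed-loop states obtained by choosing, at each vertex and each edge, a permutation that reconnects the strands, weighted by the product of the signs of these permutations and by $(-2)^{(\#\text{loops})}$.

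First I would treat part (b), the $\Theta$-network, as a warm-up since it is the cleanest instance: the graph has two vertices joined by three edges colored $a,b,c$. Admissibility lets one split the $a$ strands into $a_1$ going one way and $a_2$ the other, etc.; matching the two vertices forces the standard triangle-type counts $(-a+b+c)/2$, $(a-b+c)/2$, $(a+b-c)/2$ for the three ``bundles'' of strands running between the edges. Counting the resulting loops and collecting the signs coming from the antisymmetrizers gives a single closed-form term, and dividing by $\calI! = ((-a+b+c)/2)!((a-b+c)/2)!((a+b-c)/2)!$ for the two vertices (which here is the square of the vertex factor — actually the product over both vertices, which are identical) together with the Penrose loop weight $(-2)$ produces exactly the right-hand side of \eqref{eq.3jsum}, including the sign $(-1)^{(a+b+c)/2}$ and the factor $(a+b+c)/2+1$. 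In fact the cleanest route is to cite the known formula of \cite{KL} for the bracket normalization $\la\Theta,\ga\ra^B$ and then convert between $\la\cdot\ra^B$ and $\la\cdot\ra$ using \eqref{eq.evalB}, \eqref{eq.evE} and \eqref{eq.evalstandard}, \eqref{eq.evI}; this is a purely bookkeeping exercise with factorials.

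For part (a), the tetrahedron, I would do the same: start from the classical Kauffman--Lins formula for the $6j$-symbol in the bracket normalization (a single sum over an index $k$ ranging between $\max_i T_i$ and $\min_j S_j$, with summand involving $(-1)^k(k+1)!$ divided by various factorials), quoted in \cite{KL,We}, and then pass to our normalization $\la\cdot\ra$ via the factorial identity $\la\cdot\ra = (\calE!/\calI!)\,\la\cdot\ra^B$. The six edge-factorials of $\calE!$ and the twelve half-sum factorials hidden in the Kauffman--Lins summand recombine — after writing $S_i-k$, $k-T_j$ and using $\sum_i S_i - \sum_j T_j$ relations among the half-sums — into the single multinomial coefficient $\binom{k}{S_1-k,S_2-k,S_3-k,k-T_1,k-T_2,k-T_3,k-T_4}$ appearing in \eqref{eq.6jsum}, with leftover factor $(k+1)$ and sign $(-1)^k$. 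One has to check that the seven lower entries of this multinomial do sum to $k$, i.e. $(S_1-k)+(S_2-k)+(S_3-k)+(k-T_1)+\dots+(k-T_4) = S_1+S_2+S_3 - (T_1+T_2+T_3+T_4) + k$, and verify via \eqref{eq.Si}–\eqref{eq.Tj} that $S_1+S_2+S_3 = T_1+T_2+T_3+T_4$, which is immediate since both equal $a+b+c+d+e+f$ divided by $2$... more precisely $\sum S_i = (a+b+c+d+e+f)$ and $\sum T_j = (a+b+c+d+e+f)$, so the difference vanishes and the entries indeed sum to $k$. Finally I would confirm the summation range: the lower multinomial entries must be nonnegative, which is exactly $k\le S_j$ for all $j$ and $k\ge T_i$ for all $i$, matching the stated bounds.

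The main obstacle I anticipate is not any single deep idea but the careful reconciliation of conventions: the precise normalization of the Jones--Wenzl projector at $A=-1$, the sign conventions in the vertex replacement of Figure \ref{fig.evaluation} versus those in \cite{KL}, and tracking how the cyclic ordering at each vertex (which by Lemma \ref{lem.evaluation}(b) can flip the sign) interacts with the stated orientation of the tetrahedron ``as in Figure \ref{fig.exspin}''. Because the paper explicitly attributes the formula to \cite{KL} and \cite{We}, the honest and efficient proof is to invoke their result in the bracket normalization and then perform the normalization change; the only real work is then the factorial algebra turning their summand into a multinomial coefficient, together with a consistency check that our cyclic orderings produce no extra sign (equivalently, that the total sign from all the antisymmetrizers already accounts for the $(-1)^k$ in the formula). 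I would present that factorial manipulation explicitly for the $6j$ case and leave the $3j$ case as the analogous, simpler computation.
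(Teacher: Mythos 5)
Your proposal is correct and takes essentially the same route as the paper: the paper gives no proof at all, merely attributing the formulas to \cite{KL} and \cite{We} ``using our normalization,'' and the conversion $\la\cdot\ra = (\calE!/\calI!)\la\cdot\ra^B$ together with the check that $\sum_i S_i = \sum_j T_j$ (so the seven multinomial entries sum to $k$ and the admissibility range matches the stated summation bounds) is exactly the implied bookkeeping.
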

Finally note that the evaluation of an $n$-labeled unknot is equal to 
$(-1)^n(n+1)$. Recoupling is a way to modify a spin network locally, while 
preserving its evaluation. This is done as in Figure 
\ref{fig.recoupling}. The topmost formula is called the recoupling formula and 
follows from the recoupling 
formula in \cite{KL}, using our conventions. The other two pictures in the 
figure show the 
bubble formula and the triangle formula.
\begin{figure}[htpb]
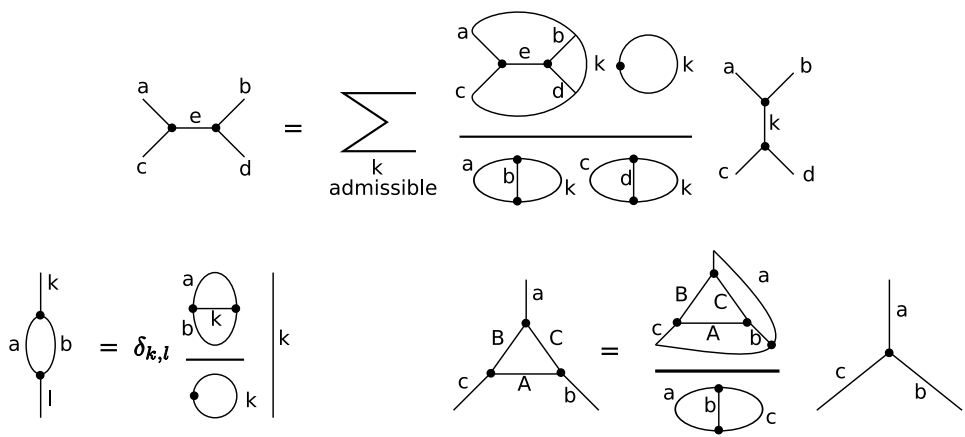

$$
\psdraw{recoupling}{5in}
$$
\caption{The recoupling formula (top), the bubble formula (left) and the 
triangle formula (right).
The sum is over all $k$ for which the network is admissible, and $\d_{k,l}$
is the {\em Kronecker delta function}.
}\lbl{fig.recoupling}
\end{figure}
The bubble formula shown on the left of Figure \ref{fig.recoupling}
serves to remove all bigon faces. Likewise the triangle formula can be 
used to remove triangles.
To see why the recoupling and bubble formulas suffice to write any spin 
network 
as a multi sum of products of $6j$-symbols divided by thetas we argue as 
follows. Applying the 
recoupling formula to a cycle in the graph reduces its length by one. Keep 
going until you get a multiple edge which can then be removed by the bubble 
formula. 
Although the triangle formula follows quickly from the bubble formula and 
the recoupling formula it is important enough to state on 
its own. For example the triangle formula shows that the evaluation of the 
class of 
\emph{triangular networks} is especially 
simple. The triangular networks are the 
planar graphs that can be obtained 
from the tetrahedron by repeatedly replacing a vertex by a triangle. 
By the triangle formula the evaluation of any triangular network is simply 
a product of 
$6j$-symbols divided by thetas. No extraneous summation will be introduced.
To illustrate how recoupling theory works, let us evaluate the regular 
$s$-sided prism and $K_{3,3}$. Consider the $s$-sided prism network as shown 
in Equation \eqref{eq.drum} (for $s=5$)
where every edge is colored by the integer $n$. In the 
figure we have left out most of the labels $n$ for clarity. By convention 
unlabeled edges are colored by $n$. Performing the recoupling move on every 
inward pointing edge we transform the prism into a string of bubbles that is 
readily evaluated. 
\begin{equation}
\lbl{eq.drum}
\psdraw{drum1}{1in}=\sum_{k \, \text{admissible}}
\left(\psdraw{drum2}{0.6in}\right)^s \psdraw{drum3}{1in}=
\sum_{k \, \text{admissible}} 
\left(\psdraw{drum4}{0.5in}\right)^s \psdraw{drum5}{0.2in}
\end{equation}
Observing that if $n$ is odd the network is not admissible (and thus
evaluates to zero), and 
denoting the tetrahedron and the theta with one edge colored by $k$ and the 
others by $n$ by $S(n,k)$ and $\th(n,k)$ we conclude the following
formula for the $n$-colored $s$-sided prism. 
\begin{proposition}
\lbl{prop.drum}
If $n=2N$ is even we have
$$
\la \mathrm{Prism_s}, 2N \ra=
\sum_{j = 0}^{2N}(2j+1)
\left(\frac{S(2N,2j)}{\th(2N,2j)}\right)^s 
$$
and if $n$ is odd we have 
$\la \mathrm{Prism_s}, n \ra=0$.
\end{proposition}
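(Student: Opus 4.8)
The odd case is immediate. If $n$ is odd then at every vertex of $\mathrm{Drum}_s$ the three incident edges are colored $n$, so the sum of the three labels around the vertex is $3n$, which is odd; the network is therefore not admissible and $\la\mathrm{Drum}_s,n\ra=0$ by Definition~\ref{def.evalP}. So assume $n=2N$. The plan is to make the computation indicated in \eqref{eq.drum} rigorous. First, apply the recoupling formula (top of Figure~\ref{fig.recoupling}) to each of the $s$ radial edges of the drum in turn; each such edge has two trivalent endpoints, hence sits in the $H$-configuration the move requires (and it still does after the previous radial edges have been recoupled), so the move replaces it by a new edge of some admissible color $k_i$, weighted by the corresponding recoupling coefficient (a ratio of a tetrahedron by thetas, times the value of a colored loop, all read off Figure~\ref{fig.recoupling}). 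The key geometric point is to check what graph remains after all $s$ moves: the two $s$-gons of the drum get cut open and, tracing how the edge-pieces reconnect, one sees that the residual network is a cyclic ``necklace'' of $s$ bigons, in which the $i$-th and $(i{+}1)$-st bigon are joined by the new edge colored $k_i$, and both strands of every bigon are colored $n$.

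Now apply the bubble formula (left of Figure~\ref{fig.recoupling}) to each of the $s$ bigons. A bigon with both strands colored $n$ sitting between edges colored $k_i$ and $k_{i+1}$ evaluates to $\d_{k_i,k_{i+1}}$ times $\th(2N,k_i)/\big((-1)^{k_i}(k_i+1)\big)$ times a single edge colored $k_i$. The product of the $s$ Kronecker deltas forces $k_1=\dots=k_s=:k$, so the sum over the new colors collapses to a single sum over $k$, and the residual graph becomes one closed loop colored $k$, of value $(-1)^k(k+1)$ (the value of the $k$-colored unknot, Lemma~\ref{lem.6j3jvalue}). Multiplying the $s$ recoupling coefficients, the $s$ bubble coefficients, and this final loop value, the powers of $\th(2N,k)$ and of $(-1)^k(k+1)$ cancel against each other except for a single factor $(-1)^k(k+1)$, leaving each ``rung'' contributing exactly $S(2N,k)/\th(2N,k)$; that is,
\[
\la\mathrm{Drum}_s,2N\ra=\sum_{k}(-1)^k(k+1)\left(\frac{S(2N,k)}{\th(2N,k)}\right)^{s}.
\]
Finally, the two vertices of each residual tetrahedron incident to the edge colored $k$ are colored $(2N,2N,k)$, so admissibility forces $k$ to be even; writing $k=2j$, the triangle inequality $0\le k\le 4N$ gives $0\le j\le 2N$, and $(-1)^k(k+1)=2j+1$, which is the claimed formula (inadmissible values of $2j$ contribute $0$, so one may sum over all $0\le j\le 2N$).

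The step I expect to be the main obstacle is verifying the ``necklace'' picture together with the constant bookkeeping: one must confirm that recoupling the $s$ spokes really does produce a cyclic chain of $s$ bigons linked by $s$ edges of colors $k_1,\dots,k_s$, and then check that the theta-factors and colored-loop factors introduced by the $s$ recoupling moves and the $s$ bubble reductions telescope exactly to $\big(S(2N,k)/\th(2N,k)\big)^{s}$ times one factor $(-1)^k(k+1)$. Everything else — the reduction to a single sum via the Kronecker deltas, the admissibility bookkeeping, and the value $(-1)^k(k+1)$ of a colored unknot — follows directly from Definition~\ref{def.evalP}, Lemma~\ref{lem.6j3jvalue}, and the identities of Figure~\ref{fig.recoupling}.
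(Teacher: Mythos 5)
Your proof is correct and follows essentially the same route as the paper's: Equation \eqref{eq.drum} is exactly the computation you describe — recouple each radial spoke, reduce the resulting chain of bigons with the bubble formula (whose Kronecker deltas collapse the sum to a single $k$), and evaluate the remaining $k$-colored loop as $(-1)^k(k+1)$, with admissibility at the $(2N,2N,k)$ vertices forcing $k=2j$ even. Your version merely spells out the bookkeeping that the paper leaves to the pictures.
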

For small values of $s$ the prism can be evaluated in a more straight forward 
way, thus providing some well known identities of $6j$-symbols. Namely when 
$s = 1$ we get zero, when $s = 2$ we find some thetas and when $s = 3$ we have 
by the triangle formula a product of two $6j$-symbols thus giving a special 
case of the Biedenharn-Elliott identity \cite{KL}. For $s = 4$ we find a 
formula for the regular cube, that will be used in the appendix. 
We know of no easier expression for the evaluation in this case. 
A similar computation for $K_{3,3}$ cyclically ordered as a plane hexagon 
with its three diagonals gives the following.
\begin{proposition}
\lbl{prop.K33}
If $n=2N$ is even we have
$$
\la K_{3,3}, 2N \ra=
\sum_{j = 0}^{2N}(-1)^j(2j+1)\left(\frac{S(2N,2j)}{\th(2N,2j)}\right)^3
$$ 
and if $n$ is odd we have $\la K_{3,3}, n \ra=0$.
\end{proposition}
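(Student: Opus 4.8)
The plan is to mimic exactly the recoupling computation that produced Proposition \ref{prop.drum} for the drum, keeping careful track of the extra signs that arise because $K_{3,3}$, cyclically ordered as a plane hexagon with its three long diagonals, is \emph{not} a planar ribbon graph. First I would set up the picture: draw $K_{3,3}$ as a hexagon $v_1\dots v_6$ (the bipartition being the odd-indexed and even-indexed vertices) with the three diagonals $v_1v_4$, $v_2v_5$, $v_3v_6$, all edges colored $n=2N$, and fix the counterclockwise cyclic order at each vertex. As in \eqref{eq.drum}, apply the recoupling move to each of the three diagonals. Each recoupling introduces a summation variable $k_i$, a $6j$-symbol $S(2N,k_i)$ over a theta $\th(2N,k_i)$, and replaces the diagonal by a $k_i$-colored strand running around a portion of the hexagon. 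After doing this to all three diagonals one is left, just as in the drum case, with a closed chain of bubbles; the bubble formula forces $k_1=k_2=k_3=:k$ (pick up Kronecker deltas) and collapses the chain. The three bubble removals each contribute a factor $\th(2N,2j)$-type denominator, and recombining everything gives, up to the sign to be determined, $\sum_k (\dim k)\, (S(2N,k)/\th(2N,k))^3$ where $\dim k=(-1)^k(k+1)$ is the value of the $k$-colored unknot recorded after Lemma \ref{lem.6j3jvalue}.

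The heart of the matter — and the one real difference from Proposition \ref{prop.drum} — is the sign $(-1)^j$ in front, with $k=2j$. I would pin it down in one of two equivalent ways. Method one: use Lemma \ref{lem.evaluation}(b). Realizing $K_{3,3}$ as ``hexagon plus diagonals'' forces the cyclic order at the three vertices where a diagonal meets the hexagon to differ from the order one would get in the planar drum-like surgery, and each such mismatch at a vertex with all edges eventually colored contributes the sign $(-1)^{(a(a-1)+b(b-1)+c(c-1))/2}$; tracking which vertices are affected when the diagonals are straightened, and using that the closed loop carrying color $k$ picks up a half-twist, the net effect is a factor $(-1)^{k/2}=(-1)^j$ relative to the drum. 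Method two (cleaner bookkeeping): note the $k$-colored unknot appearing after the bubble collapse is actually an \emph{unknot with a framing twist} coming from the nonplanar embedding of the diagonals; its value is $(-1)^k(k+1)$ times the twist sign, and with all edges $2N$ even the twist sign on a $k$-colored loop is $(-1)^{k(k-1)/2}$, which for even $k=2j$ equals $(-1)^j$. Either way one gets the claimed extra $(-1)^j$.

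The admissibility/parity statement — that $\la K_{3,3},n\ra=0$ for $n$ odd, and that the surviving sum runs $j=0,\dots,2N$ — is immediate: around any vertex the three incident edges are all colored $n$, so the vertex sum $3n$ must be even, forcing $n$ even; and for $n=2N$ the triangle inequalities at the bubble vertex $(2N,2N,k)$ give $0\le k\le 4N$, i.e.\ $k=2j$ with $0\le j\le 2N$ (here $k$ is forced even because $(2N,2N,k)$ admissible requires $k$ even). The main obstacle I anticipate is purely a matter of sign-discipline in Method one: correctly accounting for \emph{every} cyclic-order mismatch and every half-twist introduced while sliding the three diagonals into the bubble-chain position, since miscounting any one of them flips the global sign. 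Method two sidesteps most of this by quarantining the nonplanarity into a single framing factor on the final loop, so I would present that as the main line of argument and relegate the explicit $6j$-recoupling identities (which are the same ones used for the drum) to a reference to Lemma \ref{lem.6j3jvalue} and \cite{KL}.
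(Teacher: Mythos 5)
Your proposal is correct and follows essentially the same route as the paper, which derives the $K_{3,3}$ formula by the same recoupling-on-the-diagonals computation used for $\mathrm{Drum}_3$ in Proposition \ref{prop.drum} and attributes the extra $(-1)^j$ precisely to the sign arising from the change of cyclic order forced by the nonplanar ribbon structure (cf.\ Lemma \ref{lem.evaluation}(b), whose half-twist sign $(-1)^{k(k-1)/2}$ is exactly your Method two). Your parity and admissibility arguments for the vanishing at odd $n$ and for the range $k=2j$, $0\le j\le 2N$, also match the paper's.
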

Note the similarity between $\mathrm{Prism}_3$ and $K_{3,3}$. The only 
difference is the sign that comes up in the calculation when one needs to 
change the cyclic order. The extra sign makes $\la K_{3,3}, 2N\ra = 0$ 
for all odd $N$. 
This is because changing the cyclic ordering at a vertex takes the 
graph into itself, while it produces a sign $(-1)^N$ when all edges are 
colored $2N$.

\subsection{Generating series and chromatic evaluation}
\lbl{sub.chrom}
Recall the generating function $F_\Ga(z)$ for all spin network evaluations 
with the same underlying graph $\Ga$ from Definition \ref{def.Generating}. 
We are using variables $z = (z_e)_{e\in E(\Ga)}$, one for each edge, and 
abbreviate monomials $\prod_{e\in E(\Ga)}z_e^{\ga(e)}$ as $z^\ga$. Our goal is to 
express $F_\Ga$ explicitly in terms of $\Ga$. To do so we need a couple of 
definitions.
\begin{definition}
\lbl{def.cycle}
Given a cubic ribbon graph $\Ga$ define a \emph{cycle} to be a 
(possibly disconnected) $2$-regular subgraph of $\Ga$. The set of all cycles is denoted by 
$C_\Ga$.
\end{definition}
In terms of the cycles we define a polynomial and a quadratic form.
\begin{definition}
\lbl{def.curve}
Given a cubic ribbon graph $\Ga$ and $X\subset C_{\Ga}$ we define
\begin{equation}
\lbl{eq.cycleP}
P_{\Ga,X}(z) = \sum_{c\in C_{\Ga}}\e_X(c) \prod_{e \in c} z_e
\end{equation} 
where $\e_X(c)=-1$ (resp. $1$) when $c \in X$ (resp. $c \not\in X$).
Also define the function $Q_\Ga$ on the subsets of $C_\Ga$ as follows. Let 
$Q_\Ga(X)$ be the number of unordered pairs $\{c,c'\}\subset X$ with the 
property that $c$ and $c'$ intersect in an odd number of places when drawn 
on the thickening of $\Ga$. 
\end{definition}
Note that the cyclic orientation of $\Ga$ defines a unique thickening. Also, 
the polynomials $P_{\Ga,X}$ all have constant coefficient $1$.
We will call $P_\Ga = P_{\Ga,\emptyset}$ the {\em cycle polynomial} of $\Ga$. It is 
independent of the cyclic orientation of $\Ga$. Notice how the other 
$P_{\Ga,X}$ only differ from the cycle polynomial in the signs of the 
individual monomials. It is interesting to note that the cycle polynomial 
determines the cubic graph, up to a well-determined ambiguity. This follows 
from a classic theorem of Whitney, which we quote for the benefit of the 
reader. Recall that a graph $\Ga$ is $2$-connected (resp. $3$-connected) 
if it remains connected after removing any one (resp. any two) vertices of 
$\Ga$.
A {\em Whitney flip} is the following move on a graph (where $R$ and $L$
contain at least two edges):
$$
\psdraw{wflip}{3in}
$$
A Whitney flip is the graph-theoretic analogue of {\em a knot mutation}
(see \cite{Ad}) and can only be applied to graphs which are not $3$-connected.

\begin{proposition} 
\cite{Wh}
\lbl{prop.Whitney}
\rm{(a)} Let $\Ga_1,\Ga_2$ be two $2$-connected cubic graphs with same
cycle polynomial. Then $\Ga_2$ is obtained from $\Ga_1$ by a sequence
of Whitney flips.
\newline
\rm{(b)} Let $\Ga$ be a $3$-connected cubic graph. The cycle polynomial $P_\Ga$ 
uniquely determines $\Ga$.
\end{proposition}

\begin{proof}
This follows from a more general theorem of Whitney \cite{Wh} that states the following. Two finite
$2$-connected graphs with a bijection on the set of vertices that preserves 
the set of cycles are related by a sequence of Whitney flips. This also works for non-cubic graphs
if we define a {\em cycle} $C$ of a finite graph $\Ga$ to be a subgraph of $\Ga$ with the same vertex set as $\Ga$ such that every
vertex of $C$ has even valency. In case $\Ga$ is a cubic graph, a cycle of $\Ga$ in the above sense exactly
coincides with Definition \ref{def.cycle}. Thus, part (a) follows.

Part (b) follows from (a) and the fact that $3$-connected graphs cannot be
Whitney flipped.
\end{proof}

With the definitions in place we can finally state the precise version 
of the last part of Theorem \ref{thm.Eval}: 
\begin{theorem}
\lbl{thm.GF} 
For every cubic ribbon graph $\Ga$ we have
\begin{equation}
F_{\Ga}(z)=\sum_{X \subset C_\Ga}\frac{a_X}{P_{\Ga,X}^2} \in 
\BZ[[z]] \cap \BQ(z).
\end{equation}
where the coefficients are given by
\[a_X = \frac{1}{2^{|C_\Ga|}}\sum_{Y\subset C_\Ga}(-1)^{Q_\Ga(Y)+|X\cap Y|}\]
\end{theorem}
\begin{corollary}
\lbl{cor.GF}
For every spin network $(\Ga,\ga)$, the evaluation $\la \Ga, \ga \ra$
is an integer number and $\la \Ga, n\ga \ra$ is exponentially bounded. 
\end{corollary}
In particular Theorem \ref{thm.Eval} reduces to Theorem \ref{thm.GF} above.
To see how the particular case of planar spin networks comes about we 
note that
\begin{corollary}
\lbl{cor.We}
When $\Ga$ is planar with the counterclockwise orientation, then all
cycles intersect an even number of times so $(-1)^{Q(X)} = 1$
and hence only $a_{\emptyset}$ is non-zero. It follows
that
$$
F_{\Ga}= \frac{1}{P_{\Ga,\emptyset}^2}
$$
recovering an earlier theorem by Westbury \cite{We}. 
\end{corollary}
The proof of this theorem uses the {\em chromatic evaluation method}
which goes back to \cite{Pe2}. Our proof builds on earlier work by 
\cite{We} and \cite{KL} on planar spin networks and will be given in the next subsection.

\subsection{Chromatic evaluation}
\lbl{sub.chromatic}
\begin{definition}
For $N \in \mathbb{Z}$ define the evaluation $\la \Ga,\ga \ra_N^P$
just as in Definition \ref{def.evalP} except that the value of the a loop 
is now $N$ instead of $-2$.
\end{definition}
Note that by definition $\la \Ga,\ga \ra_{-2}^P = \la \Ga,\ga \ra^P$. However 
for positive $N$ the evaluations are easier to work with. Let $V$ be an 
$N$-dimensional vector space with basis $B = \{b_{1},\hdots b_N\}$. For 
definiteness, let us state that by a loop we mean an immersion of the 
circle. Recall that in the definition 
of the evaluation, the
first step is to replace a diagram of the graph $\Ga$ by a linear 
combination of closed loops. We will call this collection of embedded 
loops the expanded diagram of $\Ga$. Instead of directly evaluating each 
closed loop as $N$ we can also view the expanded diagram as the 
composition (and linear combination) of copies of three types of maps:
\begin{eqnarray*}
\cup: V\otimes V \to \mathbb{C} &\quad \cap: \mathbb{C} \to V \otimes V 
\quad & \times: V\otimes V \to V\otimes V \\
e_i\otimes e_j \mapsto \delta_{i,j} &\quad 1 \mapsto \sum_i e_i\otimes e_i 
\quad & e_i\otimes e_j \mapsto e_j\otimes e_i
\end{eqnarray*} 
Here the direction of composition is upwards. Composing all the maps 
gives a linear map from the ground field $\mathbb{C}$ to itself, which 
is multiplication by the scalar $\la \Ga,\ga \ra_{N}^P$. This interpretation 
of the invariant coupled with a counting argument leads us to an expression 
of the evaluation in terms of cycle configurations that we will define now. 
\begin{definition}
\label{def.loopconf}
Define a \emph{cycle configuration} to be a function $L:C_{\Ga}\to\mathbb{N}$ such that $L(\emptyset)=0$. 
A cycle configuration $L$ defines a coloring $\ga(L)$ as follows 
\[\ga(L)(e) = \sum_{c\in C_\Ga:e\in c}L(c)\quad \text{and}\quad
|L| = \sum_{c\in C_\Ga}L(c)\quad \text{and} \quad L! = \prod_{c\in C_\Ga}L(c)!\] 
Finally define the quadratic form $Q_{\Ga}$ on cycle configurations as 
$Q_\Ga(L) = \sum_{\{c,d\}}L(c)L(d)$, where $\{c,d\}\subset C$ runs over the 
unordered pairs of cycles that intersect in an odd number of places. 
\end{definition}
Viewing a (non-empty) subset $X \subset C_\Ga$ as the cycle configuration that is $1$ 
on $X$ and $0$ elsewhere,
this definition of $Q_\Ga$ coincides with the one given in Definition 
\ref{def.curve}.
We can now state and prove the main lemma that expresses the evaluation 
for positive $N$ in terms of cycle configurations.
\begin{lemma}
\label{lem.evN}
For positive integers $N$ we have
\[
\la \Ga,\ga \ra_N^P 
= \sum_{L:\ga(L) = \ga}(-1)^{Q_\Ga(L)}{
N\choose L}\mathcal{I}! 
\]
\end{lemma}
Here ${N\choose L}$ is defined as $\frac{N(N-1)\hdots(N-|L|+1)}{L!}$ and 
recall $\mathcal{I}!$ is the normalization factor from Definition 
\ref{def.eval}.
\begin{proof}
We view $\la \Ga,\ga \ra_N^P$ as a linear combination of the composition 
of maps consisting of the three elementary maps as explained above. This can 
be made more precise by orienting the edges of $\Ga$ and defining 
$\mathrm{Cups}$ to be the set of the cups (local minima) in the expanded diagram of $\Ga$. 
Now define the product of symmetric groups $S_{\ga} = \prod_{e\in E(\Ga)}S_{\ga(e)}$. 
For a function $f: \mathrm{Cups}\to B$ and $\sigma\in S_\ga$, define 
$\la f, \sigma \ra$ to be $1$ if $f$ is constant along the loops we get 
by connecting the cups using the crossings prescribed by $\sigma$. In all 
other cases $\la f, \sigma \ra = 0$. Composing the elementary maps shows 
directly that the evaluation can be expressed as:
\[\la \Ga,\ga \ra_N^P = \sum_{\sigma \in S_{\ga}}\mathrm{sgn}(\sigma)
\sum_{f:\mathrm{Cups}\to B}\la f,\sigma \ra\]
Call a pair $(f,\sigma)$ good if $\la f,\sigma\ra = 1$, no loops self 
intersect and given any two distinct loops $\ell,\ell'$ sharing an edge 
of $\Ga$ we have $f(\ell) \neq f(\ell')$. Reversing the order of 
summation in the above equation we see that all but the good pairs 
$(f,\sigma)$ cancel out.
A good pair $(f,\sigma)$ determines a unique cycle configuration 
$L_{f,\sigma}$ as follows. Define $L_{f,\sigma}(\emptyset) = 0$ and for non-empty cycles $c$
we define $L_{f,\sigma}(c)$ to be the number of $b\in B$ such that $f^{-1}(b)$ 
traces out $c$. Notice that for any $b\in B$ 
the fiber $f^{-1}(b)$ consists of 
a set of loops that trace out a cycle $c$ without hitting any edge of 
$\Ga$ twice. 

Two good pairs that determine the same cycle configuration must have the 
same sign $\mathrm{sgn}(\sigma)$. To see this we interpret the sign as 
$(-1)^{\#\text{ internal crossings}}$, where a crossing in the expanded diagram is 
\emph{internal} if it comes from a choice of $\sigma$ (not the crossings 
between distinct edges in a projection of $\Ga$). Changing $\sigma$ without 
changing the cycles that are to be traced out will always change the 
number of (internal) crossings by an even amount. It therefore makes sense 
to define the sign of a cycle configuration as $(-1)^{Q(L)}$.
To summarize what we have done so far we can rewrite the evaluation as follows
\[\la \Ga,\ga \ra_N^P = \sum_{L:\ga(L)=\ga} (-1)^{Q(L)} |G'(L)|\]
where $G'(L)$ is the set of all good pairs $(f,\sigma)$ satisfying 
$L_{f,\sigma} = L$. Ordering the basis $B$ and the $|L|$ loops determined by 
$L$ shows that $|G'(L)| = \frac{N!}{(N-|L|)!}|G(L)|$. Here $G(L)$ is the 
subset of $G'(L)$ in which the order of the basis elements agrees with the 
order of the loops. Elements of $G(L)$ are uniquely determined by $\sigma$ 
and $L$ so we will suppress $f$ in the notation. 
We should however keep in mind that using $f$ we are able to distinguish all 
the loops as we follow along the edges of $\Ga$. In particular we know which 
arcs at the vertices of $\Ga$ trace out which cycles.
To finish the proof we need to show that \[|G(L)| = \frac{\mathcal{I}!}{L!}\]
Intuitively this can be understood as the number of ways to arrange the loops
at the vertices, corrected by the symmetry of the cycle configuration. We will however
give a careful proof of this fact below.

We will proceed to count elements of the set $G(L)$ by defining a transitive action of a group of 
order $\mathcal{I}!$ on $G(L)$ with stabilizer of size $L!$. To this end 
call a pair of half edges at a common vertex of $\Ga$ an \emph{angle} and 
denote the set of all angles by $\mathcal{A}$. Looking at the expanded 
diagram of $\Ga$ we define $|a|$ to be the number of arcs passing through 
the angle $a$. By the cyclic orientation on $\Ga$ the angles are 
automatically oriented. Finally define $S_A = \prod_{a\in \mathcal{A}}S_{|a|}$ 
and notice that as promised $|S_A| = \mathcal{I}!$.
The action of $S_A$ on $G(L)$ is defined as follows. At every angle $a$ we 
take a permutation $T_a$ and its inverse and push $T_a$ into the permutation 
at the edge of $\Ga$ it points to and $T_a^{-1}$ into the edge the angle 
points away from. So more formally an element $T = (T_a)_{a\in \mathcal{A}} 
\in S_A$ acts on $\sigma = (\sigma_e)_{e\in E} \in G(L)$ by sending $\sigma_e$ 
to $T_aT_b^{-1}\sigma_eT_cT_d^{-1}$. Here $a,b,c,d$ are the angles adjacent to 
$e$, such that $b$ and $d$ are pointing away from $e$, see Figure 
\ref{fig.Chromatic}. The action is well defined since the pairs $(T_a,T_b)$ 
and $(T_c,T_d)$ commute and the order of multiplication is determined by the 
orientation of the edge $e$. Also the action does not change the cycle 
configuration, just the way the loops are shaped by the permutations so 
it is indeed an action on $G(L)$. 
\begin{figure}[htpb]
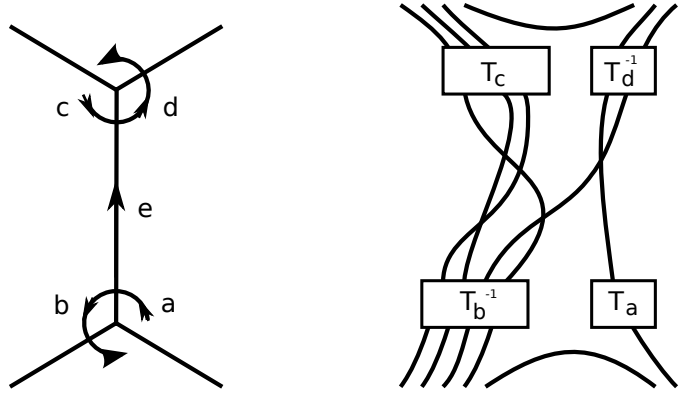

$$
\psdraw{Chromatic}{3.5in}
$$
\caption{On the left we see an edge $e$ of $\Ga$ and its adjacent angles 
$a,b,c,d$ together with their orientations.
In the example on the right the permutation $\sigma_e = (1354)$ is drawn 
in the middle, and the numbers of strands in the four angles are given by 
and $|a| = 1$, $|b| = 4$, $|c| = 3$, $|d| = 2$. The action of $T$ is 
indicated by the white boxes.}\lbl{fig.Chromatic}
\end{figure}
To prove that the action is transitive note that $L$ and $f$ determine how 
the arcs making up the angles are connected. Any $\sigma'\in G(L)$ is 
therefore related to our $\sigma$ by 
$\sigma'_e = \tau_{e,a}\tau_{e,b}\sigma_e\tau_{e,c}\tau_{e,d}$, for some 
$\tau_{e,j} \in S_{|j|}$ permuting the strands at the angle $j$. Let 
$e,e'$ be the edges adjacent to $a$. Since $\sigma$ and $\sigma'$ both 
belong to the same cycle configuration $L$, the product $\tau_{e,a}\tau_{e',a}$ 
can only permute loops that trace the same cycle. The loops do not self 
intersect so the permutations of the loops along any (component of a) cycle 
multiply to $1$. This means that one can modify the permutations $\tau$ so 
that they become inverses along each angle, without changing the product of 
the two permutations adjacent to any edge. Carrying out these modifications 
for each cycle gives the required $T$.
It remains to show that the stabilizer of the action consists of $L!$ 
elements. So suppose we are given a $T \in S_A$ such that at every edge 
$\sigma_e = T_aT_b^{-1}\sigma_eT_cT_d^{-1}$. First of all the $T_j$ can only 
permute arcs belonging to loops in the same cycle. Indeed, the $T_j$ on one 
side of $\sigma_e$ have to be canceled by the $T$'s on the other side. This 
is only possible if the permuted strands connect the same pair of angles at 
the edge. By definition of the action the permutations also have to cancel 
along each angle. This allows us to follow all the non-trivial $T_j$'s 
around, showing that the permuted loops must trace the same cycle.
By the same follow along argument, permuting the arcs in a cycle at a 
single edge determines how they are to be permuted along the whole cycle. 
Therefore the cardinality of the stabilizer is $L!$.
\end{proof}
Since the evaluation $\la \Ga,\ga \ra^P_N$ is a polynomial in $N$, the 
conclusion of the previous
Lemma \ref{lem.evN} actually holds for all $N$, in particular $N = -2$. 
We record this for future use as the following corollary, where we have 
switched back to the standard normalization.
\begin{corollary}
\lbl{cor.lem}
\[
\la \Ga,\ga \ra 
= \sum_{L:\ga(L) = \ga}(-1)^{Q_\Ga(L)}{
-2\choose L}
\]
\end{corollary}

\subsection{Proof of Theorem \ref{thm.GF}}
\lbl{sub.thm.GF}
To find a generating function for these evaluations, we first need to expand 
the sign $(-1)^{Q_\Ga(L)}$ in terms of characters. That is we use Fourier analysis 
on the group $(\mathbb{Z}/2\mathbb{Z})^{|C_\Ga|}$. We often write elements of 
this group as subsets of $C_\Ga$. For every fixed $X\subset C_\Ga$ we have a 
character $(-1)^{X(L)} = (-1)^{\sum_{x\in X}L(x)}$. In case $L$ is a cycle 
configuration we extend the character to a Dirichlet character. So for 
some coefficients $a_X$ we have
\[(-1)^{Q_\Ga(L)} = \sum_{X\subset C_\Ga}a_X (-1)^{X(L)}\]
Taking inner products, the coefficients are given by
\[a_X = \frac{1}{2^{|C_\Ga|}}\sum_{Y\subset C_\Ga}(-1)^{Q_\Ga(Y)+|X\cap Y|}\]
To rewrite the generating function let us introduce variable for each cycle: 
$w = (w_c)_{c\in C_\Ga}$
and set $w_c = \prod_{e\in c}z_e$. If $\ga(L) = L$ then the color of an edge is 
the sum of the number of cycles (with multiplicity) 
containing that edge, hence
\[(-1)^{X(L)}z^{\ga}=(-1)^{X(L)}w^L = \prod_{c\in C_\Ga}(\epsilon_X(c) w_c)^{L(c)}\]
were $\epsilon_X(c)$ is the function that is $1$ if $c\notin X$ and $-1$ if 
$c \in X$.
Recall the cycle polynomial $P_{\Ga,X}(z) =\sum_{c\in C_\Ga} \epsilon_X(c) w_c$ so we 
can compute
\[
\sum_{\ga}\sum_{L:\ga(L) = \ga}{N \choose L}(-1)^{X(L)}z^{\ga} 
= \sum_{L}{N \choose L}\prod_{c\in C_\Ga}(\epsilon_X(c) w_c)^{L(c)} 
= (1+\sum_{\emptyset \neq c\in C_\Ga} \epsilon_X(c) w_c)^N = P_{\Ga,X}^{N}
\]
Now setting $N = -2$ everywhere and applying Corollary \ref{cor.lem} we can 
finish the proof
\[
F_\Ga(z) = \sum_{\ga}\la \Ga,\ga\ra z^{\ga} = \sum_{X\subset C_\Ga}a_X P_{\Ga,X}^{-2}
\]
\qed

\section{Asymptotic expansions}
\lbl{sec.asexp}
The goal of this section is to prove Theorem \ref{thm.Nilsson} that 
provides a Nilsson type asymptotic expansion for spin network evaluations. 
The general idea is to define the (single variable) generating function
\begin{definition}
\lbl{def.singleGen}
Let $(\Ga,\ga)$ be a spin network. The single variable generating function 
$F_{\Ga,\ga}$ is the formal power series
\begin{equation}
\lbl{eq.singleGen}
F_{\Ga,\ga}(z) = \sum_z\la\Ga,n\ga\ra z^n
\end{equation}
\end{definition}
Our goal is to show that this function is a $G$-function (defined in 
Section \ref{sub.Gfunction} below). It then follows from the theory of 
$G$-functions that 
the sequence $\la\Ga,n\ga\ra$ is of Nilsson type. Before doing so we first 
make some comments on Nilsson type sequences in general.

\subsection{Sequences of Nilsson type}
\lbl{sub.nilsson}
Recall from Definition \ref{def.nilsson} that a Nilsson type sequence 
$(a_n)$ has the following asymptotic expansion as $n\to \infty$ 
\[a_n \sim \sum_{\l,\a,\b} \l^{n} n^{\a} (\log n)^{\b} S_{\l,\a,\b}
h_{\l,\a,\b}(1/n)\]
The meaning of this expansion is entirely analogous to the more familiar 
special case where there is only one growth rate:
$$
a_n \sim \l^n n^{\a} (\log n)^{\b} \sum_{k=0}^\infty \frac{\mu_k}{n^k}
$$
(which goes back to Poincar\'e, \cite{O}). In this case the meaning is that 
for every $r \in \BN$ we have
$$
\lim_{n \to \infty} \, n^r\left(a_n \l^{-n} n^{-\a} (\log n)^{-\b}
-\sum_{k=0}^{r-1} \frac{\mu_k}{n^k} \right) =\mu_r.
$$
The general case is similar but to express it we would need more notation 
see \cite{Gawhat}.
It can be shown that a Nilsson type sequence has a unique asymptotic 
expansion \cite{Gawhat}. With respect to point wise addition and 
multiplication the Nilsson type sequences form a $\BC$-algebra. Formally 
multiplying the asymptotic expansions will produce the asymptotic expansion 
of the product sequence. When adding sequences of Nilsson type, the 
asymptotic expansion of the sum sequence is only the sum of the asymptotic 
expansions if all the growth rates have the same absolute value. Otherwise 
only the terms with maximal growth rate occur.
The field $K$ generated by the coefficients of the power series $h_{\l,\a,\b}$ 
in a Nilsson type expansion a number field. See for example Section 
\ref{sec.examples}.
An important source of Nilsson type sequences are $G$-functions that we will 
introduce next.

\subsection{G-functions}
\lbl{sub.Gfunction}
In this section we recall the notion of a $G$-function, 
introduced by Siegel (\cite{Si}) in relation to 
transcendence problems 
in number theory. Many of their arithmetic and algebraic properties were 
established by Andr\'e in \cite{An}. $G$-functions appear naturally in Geometry
(as Variations of Mixed Hodge Structures), in Arithmetic and most recently
in Enumerative Combinatorics. For a detailed discussion, see \cite{An,Ga4}
and references therein. 
\begin{definition}
\lbl{def.Gfunction}
We say that a series $G(z)=\sum_{n=0}^\infty a_n z^n$ is a {\em $G$-function}
if 
\begin{itemize}
\item[(a)]
the coefficients $a_n$ are algebraic numbers,
\item[(b)]
there exists a constant $C>0$ so that for every $n\geq 1$
the absolute value of every conjugate of $a_n$ is less than or equal to 
$C^n$, 
\item[(c)]
the common denominator of the algebraic numbers $a_0,\dots, a_n$ is less 
than or equal 
to $C^n$,
\item[(d)]
$G(z)$ is holonomic, i.e., it satisfies a linear differential equation
with coefficients polynomials in $z$.
\end{itemize}
\end{definition}
For the purposes of this paper the most important property of $G$-functions 
is expressed in the following lemma.
\begin{lemma}\cite[Prop.2.5]{Ga4}
\lbl{lem.Taylor}
The sequence of Taylor coefficients 
of a $G$-function at $z=0$ is a sequence of Nilsson type.
\end{lemma}
With the help of Lemma \ref{lem.Taylor} we can now reduce the proof of 
Theorem \ref{thm.Nilsson} to the following lemma
\begin{lemma}
\lbl{lem.G}
For any admissible spin network $(\Ga,\ga)$ the generating function 
$F_{\Ga,\ga}(z)$ is a $G$-function.
\end{lemma}
In the next subsection we will give a proof of Lemma \ref{lem.G}. 

\subsection{Hypergeometric terms}
In this subsection we prove Lemma \ref{lem.G} (and hence Theorem 
\ref{thm.Nilsson}) by showing that the standard evaluations of spin 
networks are a certain type of hypergeometric multisums that we will first 
describe in general.
\begin{definition}
\lbl{def.hyperg}
An $r$-dimensional {\em balanced hypergeometric datum} $\ft$ 
(in short, {\em balanced datum}) in variables $(n,k)$, where $n \in \BN$ and 
$k=(k_1,\dots,k_r) \in \BN^r$, is 
\begin{itemize}
\item
a finite list $\{(\e_j, A_j(n,k)) \, | j \in J\}$ where 
$A_j: \BN^{r+1} \longto \BZ$
is an affine linear form in $(n,k)$ and $\e_j \in \{-1,1\}$ for all $j \in J$.
\item
a vector $(C_0,\dots,C_r)$ of algebraic numbers
\end{itemize}
that satisfies the {\em balancing condition}
\item
\begin{equation}
\lbl{eq.Ajsum}
\sum_{j=1}^J \e_j A_j=\mathrm{constant}
\end{equation}
and moreover, the set
\begin{equation}
\lbl{eq.Pt}
P_{\ft}=\{x \in \BR^r_{ \geq 0} \, | A_j(1,x) \geq 0 \,\text{for all} \, j \in J\}
\end{equation}
is a {\em compact} rational convex polytope.
\end{definition}
A balanced datum $\ft$ gives rise to a balanced term $\ft(n,k)$
(defined for $n \in \BN$ and $k \in \BZ^r \cap n P_{\ft}$), to 
a sequence $(a_{\ft,n})$ and to a generating series $G_{\ft}(z)$ defined by:
\begin{eqnarray}
\lbl{eq.defterm}
\ft(n,k) &=& C_0^n \prod_{i=1}^r C_i^{k_i} \prod_{j=1}^J A_j(n,k)!^{\e_j}
\\ \lbl{eq.ant}
a_{\ft,n} &=& \sum_{k \in \BZ^r \cap n P_{\ft}} \ft(n,k)
\\ \lbl{eq.Gzt}
G_{\ft}(z) &=& \sum_{n=0}^\infty a_{\ft,n} z^n
\end{eqnarray}
We will call the sequences $(a_{\ft,n})$ balanced multisums. 
The connection between balanced multisum sequences and their asymptotics
was given in \cite{Ga4} using the theory of $G$-functions. More precisely,
\begin{lemma}
\cite[Thm.2]{Ga4}
If $\ft$ is a balanced datum, then
the corresponding series $G_{\ft}(z)$ is a $G$-function.
\end{lemma}
Using this Lemma we can now easily prove Lemma \ref{lem.G}.
\begin{proof} (of Lemma \ref{lem.G})
Using the recoupling formulae from Section \ref{sub.recoupling} we can write 
$\la\Ga,\ga\ra$ as a multi-dimensional sum of products of $6j$-symbols, 
theta-symbols
and unknots (i.e., $1j$-symbols) with a denominator consisting of 
theta-symbols. It follows from Equations \eqref{eq.6jsum} and \eqref{eq.3jsum}
that the $6j$-symbols (resp. theta-symbols) are balanced 1-dimensional 
(resp. $0$-dimensional) sums, thus
the ratio of the product of the theta-symbols by the product of the 
theta-symbols
is a balanced multi-dimensional sum. The unknots can be written as 
$(-1)^k(k+1)!/k!$ and are therefore balanced as well. It is easy to check 
that admissibility guarantees that the multi-dimensional sum has finite range.
\end{proof}
Beware that the term $\ft(n,k)$ constructed in the above proof is neither 
unique nor canonical in any sense. 

\subsection{Integral representation of spin network evaluations}
\lbl{sub.integral}
In this final subsection we comment on the connection between Lemma 
\ref{lem.G} and 
Theorem \ref{thm.Eval} on the rationality of the multivariate generating 
function.
The idea is that the single variable generating function $F_{\Ga,\ga}$ is a 
diagonal of the multivariate generating function $F_\Ga$, where the diagonal 
is defined as follows.
\begin{definition}
\lbl{def.diagonal}
Given a power series $f(x_1,\dots,x_r) \in \BQ[[x_1,\dots,x_r]]$ and
an exponent $J=(j_1,\dots,j_r) \in \BN_+^r$, we define the $J$-diagonal
of $f$ by
\begin{equation}
\lbl{eq.diagonal}
(\D_J f)(z)=\sum_{n=0}^\infty [x^{n J}](f) z^n \in \BQ[[z]]
\end{equation}
where $[x^{n J}](f)$ denotes the coefficient of $x_1^{n j_1} \dots 
x_r^{n j_r}$ in $f$.
\end{definition}
For every spin network $(\Ga,\ga)$ we have
\begin{equation}
\lbl{eq.diagonal2}
F_{\Ga,\ga}=\D_{\ga} F_\Ga
\end{equation}
Consequently, the $G$-function $F_{\Ga,\ga}(z)$ is the diagonal
of a rational function, and thus it comes from geometry in the following 
sense. 
Fix a power series $f(x_1,\dots,x_r) \in \BQ[[x_1,\dots,x_r]]$ convergent at 
the origin and an exponent $J=(j_1,\dots,j_r) \in \BN_+^r$ and consider the 
diagonal $(\D_J f)(z) \in \BQ[[z]]$ as in Definition \ref{def.diagonal}.
Let $\calC$ denote a small real $r$-dimensional torus around the origin.
Then we have the following.
\begin{lemma}
\lbl{lem.diagonal}
With the above assumptions,
\begin{equation}
\lbl{eq.diagonal3}
(\D_J f)(z)=\frac{1}{(2 \pi i)^r} \int_{\calC}
\frac{f(x_1,\dots,x_r)}{x_1^{j_1} \dots x_r^{j_r} -z} d x_1 \wedge \dots \wedge dx_r.
\end{equation}
\end{lemma}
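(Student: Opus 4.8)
The plan is the classical one: expand the Cauchy kernel $1/(x_1\cdots x_r-z)$ as a geometric series in $z$ and integrate term by term, recovering the diagonal Taylor coefficients of $f$ through the iterated one-variable Cauchy formula. Since $f$ is convergent at the origin, fix $\rho>0$ small enough that $f(x)=\sum_{\a\in\BN^r}c_\a x^\a$ converges absolutely on the closed polydisc $\{|x_i|\le\rho,\ i=1,\dots,r\}$ (so in particular $\sum_\a|c_\a|\rho^{|\a|}<\infty$), and let $\calC=\{|x_1|=\dots=|x_r|=\rho\}$ with its product orientation. I will prove the asserted identity as an identity of functions holomorphic in $z$ on the disc $|z|<\rho^r$; comparing Taylor coefficients at $z=0$ then yields the identity in $\BQ[[z]]$. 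For such $z$ the integrand has no pole on $\calC$, since $|x_1\cdots x_r|=\rho^r>|z|$ there, so the right-hand side is indeed a well-defined holomorphic function of $z$ near $0$.

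On $\calC$ we have the expansion
$$
\frac{1}{x_1\cdots x_r-z}=\frac{1}{x_1\cdots x_r}\sum_{m=0}^\infty\frac{z^m}{(x_1\cdots x_r)^m}=\sum_{m=0}^\infty\frac{z^m}{(x_1\cdots x_r)^{m+1}},
$$
valid because $|z|/\rho^r<1$. Multiplying by $f(x)=\sum_\a c_\a x^\a$ gives the double series $\sum_{m\ge0,\ \a\in\BN^r}c_\a\,z^m\,x^{\a-(m+1)\mathbf{1}}$, where $\mathbf{1}=(1,\dots,1)$. On $\calC$ the modulus of its general term is $|c_\a|\rho^{|\a|}\,|z|^m\rho^{-r(m+1)}$, and $\sum_{m,\a}|c_\a|\rho^{|\a|}\,|z|^m\rho^{-r(m+1)}=\big(\sum_\a|c_\a|\rho^{|\a|}\big)\,\rho^{-r}\,(1-|z|/\rho^r)^{-1}<\infty$, so the double series converges absolutely and uniformly on $\calC$. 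Hence we may integrate it term by term.

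Finally, from the one-variable formula $\frac{1}{2\pi i}\int_{|x_i|=\rho}x_i^{k}\,dx_i=\delta_{k,-1}$ (valid for all $k\in\BZ$) we obtain
$$
\frac{1}{(2\pi i)^r}\int_{\calC}x^\beta\,dx_1\wedge\dots\wedge dx_r=\prod_{i=1}^r\delta_{\beta_i,-1}.
$$
Applied to $\beta=\a-(m+1)\mathbf{1}$, this shows the $(m,\a)$ term contributes exactly when $\a_i=m$ for every $i$, i.e. $\a=m\mathbf{1}$, and it then contributes $c_{m\mathbf{1}}\,z^m$. Summing over $m$ gives $\sum_{m\ge0}[x_1^m\cdots x_r^m](f)\,z^m=(\D_{\mathbf{1}}f)(z)$, which is the lemma in the case $I=\mathbf{1}$ displayed in the statement. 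For a general exponent $I=(i_1,\dots,i_r)\in\BN_+^r$ the same argument applies after replacing the kernel $1/(x_1\cdots x_r-z)$ by $x_1^{i_1-1}\cdots x_r^{i_r-1}/(x_1^{i_1}\cdots x_r^{i_r}-z)$, whose geometric expansion on $\calC$ is $\sum_{m\ge0}z^m x^{-mI-\mathbf{1}}$, so that the residue isolates $\a=mI$ and reproduces $(\D_I f)(z)$. The only delicate point in the whole argument is the interchange of summation and integration, and this is taken care of by the uniform convergence on the compact torus $\calC$ recorded above; beyond that the proof is routine bookkeeping, so I expect no real obstacle.
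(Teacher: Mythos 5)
Your argument is essentially the paper's own proof read in the opposite order: the paper writes $[x^{nI}](f)$ as a Cauchy integral against $(x_1\cdots x_r)^{-(n+1)}$ and then sums over $n$ and interchanges summation with integration, which is exactly your geometric expansion of the kernel $1/(x_1\cdots x_r-z)$ followed by term-by-term integration. Your write-up is correct and in fact slightly more careful than the paper's two-line proof, both in justifying the interchange by absolute convergence on the torus and in observing that the displayed kernel literally computes $\D_{\mathbf 1}f$, with the general exponent $I$ requiring the modified kernel $x_1^{i_1-1}\cdots x_r^{i_r-1}/(x_1^{i_1}\cdots x_r^{i_r}-z)$.
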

\begin{proof}
With the notation of Definition \ref{eq.diagonal}, 
an application of Cauchy's theorem gives for every natural number
$n$
$$
[x^{nJ}](f)=\frac{1}{(2 \pi i)^r} 
\int_{\calC}
\frac{f(x_1,\dots,x_r)}{(x_1^{j_1}
\dots x_r^{j_r})^{n+1}} dx_1 \wedge \dots \wedge dx_r.
$$
Multiplying by $z^n$ and summing up for $n$ and interchanging summation and 
integration concludes
the proof.
\end{proof}
If in addition $f(x_1,\dots,x_r)$ is a rational function, then the
singularities of the analytic continuation of the right hand-side of
\eqref{eq.diagonal3} can be analyzed by deforming the integration cycle
$\calC$ and studying the corresponding variation of Mixed Hodge Structure
as in \cite{BK}. Such $G$-functions come from geometry; see \cite{An,BK}.

\section{Examples and a conjecture on growth rates}
\lbl{sec.examples}
In this section we illustrate the result of Theorem \ref{thm.Nilsson} on 
the asymptotic expansions in the case of the $6j$-symbol. We also review 
the well known geometric interpretation of the leading asymptotics in this 
case. Finally we formulate a conjecture on the geometric meaning of the 
growth rates in the asymptotic expansion of more general spin networks.
To discuss the geometric aspects of the asymptotics of spin networks it is 
convenient to introduce one more normalization of spin network evaluations
\begin{definition}
\lbl{def.evalU}
We define the {\em unitary normalization} $\la\Ga,\ga\ra^U$ of a 
spin network evaluation $(\Ga,\ga)$ to be 
$$
\la\Ga,\ga\ra^U = \frac{1}{\Th(\ga)} \la\Ga,\ga\ra
$$
where 
$$
\Th(\ga)=\prod_{v\in V(\Ga)}\sqrt{|\la\Th,a_v,b_v,c_v\ra|}
$$
and $a_v,b_v,c_v$ are the colors of the edges at vertex $v$.
\end{definition}
Since the asymptotics of the normalization factor $\Th(\ga)$ is of Nilsson type 
by Stirling's formula \cite{O}, 
we see that $\la \Ga,n\ga\ra^U$ is still of Nilsson type. 
\subsection{The $6j$-symbol and the tetrahedron}
\lbl{sub.tet}
The special case of the tetrahedral spin network or $6j$-symbol motivates 
much of the questions we asked in the introduction. There is a well known 
interpretation of the leading asymptotics in terms of 
a metric tetrahedron $T$ dual to $\Ga$ such that the length of a (dual) 
edge $e$ is given by $\ga(e)$ \cite{PR}. Provided the $6j$ symbol is 
admissible, such a tetrahedron $T$ can always be found uniquely in either 
$\BR^3, \BR^2$ or Minkowski space $\BR^{2,1}$ \cite[Ch.8]{Bl}. We say the 
$6j$-symbol is Euclidean, Plane or Minkowskian depending on the type of $T$. 
The type is determined by the sign of the Cayley-Menger determinant of $T$. 
Let us be more specific in the Euclidean case. Denote by $\phi_e$ is the 
exterior dihedral angle of $T$ at edge $e$.
\begin{theorem}
\lbl{thm.PR}
Let $(\Ga,\ga)$ be a Euclidean $6j$ symbol. The sequence $\la \Ga,n\ga\ra^U$ is of Nilsson type with growth rates, 
Stokes constants and powers of $n$ and $\log n$ are:
$$
\lambda_\pm = e^{i\sum_j\frac{\ga(e_j)\phi_j}{2}} \qquad S_\pm 
= \frac{e^{i\sum_j \frac{\phi_j}{2}+\frac{i\pi}{4}}}{\sqrt{6\pi\mathrm{Vol(T)}}}
\qquad \alpha = \frac{3}{2}\qquad \beta = 0
$$
\end{theorem}
These formulae have been proven in \cite{Rb1}. By
analytically continuing the Euclidean formula for dihedral angles in terms of edge lengths,
the results can be extended to the Minkowskian case. This will be postponed to a future publication.
The Plane case must be different since the volume vanishes in this case. 
Also any interpretation of the terms in the asymptotic expansion of the 
$6j$-symbol beyond the ones just given is very much an open problem \cite{DL9}.
This warrants a detailed and exact investigation of the asymptotics of 
three representative $6j$-symbols using the Wilf-Zeilberger method.
In this method we compute a recursion for the sequence from which all 
terms in the asymptotic expansion except for the Stokes constants may be 
computed. 

We have chosen the simplest examples of a Euclidean $6j$-symbol, a Plane 
one and a Minkowskian $6j$-symbol. Their colorings are given by
$$
\ga_{\mathrm{Euclidean}}=(2,2,2,2,2,2), \qquad
\ga_{\mathrm{Plane}}=(3,4,4,3,5,5), \qquad
\ga_{\mathrm{Minkowskian}}=(4,4,4,4,6,6). 
$$
Using the unitary evaluation (Definition
\ref{def.evalU}) we thus consider the sequences $(a_n), (b_n)$ and $(c_n)$ 
\begin{eqnarray*}
a_n &:=& \la \psdraw{tetra}{0.18in},n \, \ga_{\mathrm{Euclidean}} \ra^U 
= \frac{n!^6}{(3n+1)!^2}
\sum_{k = 3n}^{4n} \frac{(-1)^k(k+1)!}{(k-3n)!^4(4n-k)!^3} 
\\
b_n &:=& \la \psdraw{tetra}{0.18in},n \, \ga_{\mathrm{Plane}} \ra^U
=
\frac{n!^2(2n)!^2(3n)!^2}{(6n+1)!^2}
\sum_{k = 6n}^{7n}\frac{(-1)^k(k+1)!}{(k-6n)!^4(7n-k)!(8n-k)!(9n-k)!}
\\
c_n &:=& \la \psdraw{tetra}{0.18in},n \, \ga_{\mathrm{Minkowskian}} \ra^U 
= \frac{n!^2(3n)!^4}{(7n+1)!^2}
\sum_{k = 7n}^{8n}\frac{(-1)^k(k+1)!}{(k-7n)!^4(8n-k)!(10n-k)!^2}
\end{eqnarray*}
In what follows we denote by $\det(C)$ the Cayley-Menger determinant and by 
$K$ the field
generated by the coefficients of the power series $h_{\l,\a,\b}$ in the 
asymptotic expansion.
The command 
$$
\psdraw{math1}{0.5in}
$$
loads the package of \cite{PaRi} into {\tt Mathematica}. The command
$$
\psdraw{math2}{5in}
$$
defines the summand of the sequence $(a_n)$, and the command
$$
\psdraw{math3}{2in}
$$
computes the following second order linear recursion relation for the
sequence $(a_n)$
$$
\psdraw{math4}{6in}
$$
This linear recursion has two formal power series solutions of the
form
\begin{eqnarray*}
a_{\pm,n} &=&
\frac{1}{n^{3/2}}\La_{\pm}^n 
\left(1+
\frac{-432 \pm 31 i \sqrt{2}}{576 n}+
\frac{109847 \mp 22320 i \sqrt{2}}{331776 n^2}+
\frac{-18649008 \pm 4914305 i \sqrt{2}}{573308928 n^3} 
\right. \\
& + & \left.
\frac{14721750481 \pm 45578388960 i \sqrt{2}}{660451885056 n^4}+
\frac{-83614134803760 \pm 7532932167923 i \sqrt{2}}{380420285792256 n^5}
\right. \\
& + & \left.
\frac{-31784729861796581 \mp 212040612888146640 i \sqrt{2}}{
657366253849018368 n^6}+
O\left(\frac{1}{n^7}\right)
\right)
\end{eqnarray*}
where
$$
\La_{\pm}=\frac{329 \mp 460 i \sqrt{2}}{729}=e^{\mp i 6 \arccos(1/3)}
$$
are two complex numbers of absolute value $1$. Notice the growth rates 
indeed match the interpretation in terms of dihedral angles of the regular 
tetrahedron predicted in Theorem \ref{thm.PR}.
The coefficients of the formal power series $a_{\pm,n}$ are in the number
field $K=\BQ(\sqrt{-2})$ and the Cayley-Menger determinant is $\det(C)=2^5$.
More is actually true. Namely, the sequence $(a_n)$ generates two 
new sequences $(\mu_{+,n})$ and $(\mu_{-,n})$ defined by
$$
a_{\pm,n}=\frac{1}{n^{3/2}}\La_{\pm}^n \sum_{l=0}^\infty 
\frac{\mu_{\pm ,l}}{n^l}
$$
where $\mu_{\pm,0}=1$. Each of the sequences $(\mu_{\pm,n})$ are factorially
divergent. However, the generating series $\sum_{n=0}^\infty z^n 
\mu_{\pm,n+1}/n!$ are $G$-functions (as follows from \cite{An}), 
and the sequences $(\mu_{\pm,n+1}/n!)$ are of Nilsson
type, with exponential growth rates $\La_{\pm}-\La_{\mp}$. The asymptotics of
each sequence $(\mu_{\pm,n+1}/n!)$ gives rise to finitely many new sequences,
and so on. All those sequences span a finite dimensional vector space, 
canonically attached to the sequence $(a_n)$. This is an instance of 
resurgence, and is explained in detail in \cite[Sec.4]{GM}.
The second order recursion relation for the Plane and the Minkowskian examples
has lengthy coefficients, 
and leads to the following sequences $(b_{\pm,n})$ and 
$(c_{\pm,n})$
\begin{eqnarray*}
b_{+,n} &=&
\frac{1}{n^{4/3}}\La_+^n \left(
1-\frac{1}{3 n}
+\frac{3713}{46656 n^2}
-\frac{25427}{2239488 n^3}
+\frac{9063361}{17414258688 n^4}
-\frac{109895165}{104485552128 n^5}
\right. \\
& & \left. \qquad 
+\frac{1927530983327}{2437438960041984 n^6} + \dots 
\right)
\\
b_{-,n} &=&
\frac{1}{n^{5/3}}\La_-^n \left(
1
-\frac{37}{96 n}
+\frac{3883}{46656 n^2}
-\frac{13129}{4478976 n^3}
-\frac{5700973}{8707129344 n^4}
-\frac{14855978561}{3343537668096 n^5}
\right. \\
& & \left. \qquad
+\frac{2862335448661}{2437438960041984 n^6} + \dots
\right)
\\
c_{\pm,n} &=&
\frac{1}{n^{3/2}} \La_{\pm}^n \left(
1
+\frac{336 \mp 1369 \sqrt{2}}{4032 n}
+\frac{1769489 \mp 831792 \sqrt{2}}{1806336 n^2}
+\frac{67925105712 \mp 66827896993 \sqrt{2}}{21849440256 n^3}
\right. \\
& & \left. \qquad
+\frac{5075437500833257 \mp 2589265090380768 \sqrt{2}}{176193886224384 n^4}
\right. \\
& & \left. \qquad
+\frac{100978405759997442992 \mp 98904713360431641651 \sqrt{2}}{
552544027199668224 n^5}
\right. \\
& & \left. \qquad
+\frac{685103512739058526758457 \mp 349782631602887151717776 \sqrt{2}}{
247539724185451364352 n^6} + \dots
\right)
\end{eqnarray*}
where in the Plane case we have
$$
\La_-=\La_+=-1, \qquad K=\BQ, \qquad \det(C)=0 
$$
and in the Minkowskian case we have
\begin{eqnarray*}
\La_+ &=& 
\frac{696321931873-111529584108 \sqrt{2}}{678223072849}=0.794127\dots \\
\La_- &=& \frac{696321931873+111529584108 \sqrt{2}}{678223072849}=1.25924 \dots
\\
K &=&\BQ(\sqrt{2}), \qquad \det(C)=-2^5 3^4.
\end{eqnarray*}
Again as in the Euclidean case the growth rates may be interpreted in 
terms of dihedral angles.

\subsection{A conjecture on growth rates}
\lbl{sub.conj}
In the special case where $(\Ga,\ga)$ is an admissible tetrahedron, we have 
seen a geometric interpretation for the growth rates of $\la \Ga,n\ga\ra^U$. 
We can reformulate this more concisely using mean curvature. Recall that 
for a convex Euclidean polyhedron $P$ in $\BR^3$ the mean curvature is 
defined by $M(P) =\frac{1}{2}\sum_e \ell_e \phi_e$, where $\phi_e$ is the 
exterior dihedral angle at edge $e$ and $\ell_e$ is the length of the edge.
So in the case of the tetrahedron Theorem \ref{thm.PR} says that if there 
exists a Euclidean tetrahedron $T$ whose $1$-skeleton is dual to $\Ga$ and 
whose edge lengths are given by $\ga$, then the growth rates are given by: 
$\{e^{\pm i M(T)}\}$.
We would like to conjecture that the growth rates of a spin network always 
include a growth rate corresponding to the mean curvature of the dual 
polyhedron whenever these terms make sense. For simplicity we formulate the 
conjecture for planar spin networks only.
\begin{conjecture}
\lbl{conj.mc}
Let $(\Ga,\ga)$ be a planar spin network with the counterclockwise 
orientation. Suppose that $\Ga$ is $3$-connected and that its dual can be 
viewed as the $1$-skeleton of a convex Euclidean polyhedron $P$ whose edge 
lengths are given by $\ga$. The set of growth rates of the Nilsson type 
sequence $\la \Ga,n\ga \ra^U$ contains $e^{\pm iM(P)}$. 
\end{conjecture}
By Cauchy's theorem the dual polyhedron $P$ is determined up to isometry by its $1$-skeleton and its 
edge lengths, i.e. by $(\Ga,\ga)$. This follows from the fact that $P$ has only triangular faces and is convex.

As a first test of the conjecture we show that it behaves well under the 
triangle formula on spin networks defined in Section \ref{sub.recoupling}. In particular this
will verify the conjecture for all triangular networks as defined in Section \ref{sub.recoupling}.
Let $(\Ga,\ga)$ and $(\Ga',\ga')$ be two spin networks that both satisfy 
the hypotheses of Conjecture \ref{conj.mc} and denote their dual polyhedra 
by $P$ and $P'$. Furthermore, suppose that $(\Ga',\ga')$ is obtained from 
$(\Ga,\ga)$ by replacing a vertex $v\in \Ga$ by a triangle. Dually this 
implies that $P$ can be produced by cutting off a tetrahedron from $P'$.
\begin{lemma}
If Conjecture \ref{conj.mc} is true for $(\Ga,\ga)$ then it is also true 
for $(\Ga',\ga')$.
\end{lemma}
\begin{proof}
Let the labels around the vertex $v$ be $a,b,c$ and call the labels of the 
edges of new triangle $A,B,C$ as in Figure \ref{fig.recoupling} 
(lower right) and denote by $(\psdraw{tetra}{0.18in},\psi)$ the tetrahedron 
spin network with labels $a,b,c,A,B,C$ that shows up in the triangle formula.
This formula shows that 
\[\la\Ga',n\ga'\ra^U = (-1)^{\frac{n(a+b+c)}{2}}\la\psdraw{tetra}{0.18in},n
\psi\ra^U\la\Ga,n\ga\ra^U\]
since the theta only contributes to a sign in the unitary evaluation. We 
already know Conjecture \ref{conj.mc} holds for tetrahedra with Euclidean 
duals, including $(\psdraw{tetra}{0.18in},\psi)$. Let us call the dual 
Euclidean tetrahedron $T$. 
Multiplying the asymptotic expansions on the right hand side we see that 
the growth rates will include 
\[(-1)^{\frac{(a+b+c)}{2}}e^{i(M(P)+M(T))} = e^{iM(P')}\]
To see why the equality holds note that we can dissect $P'$ into $P$ and 
$T$ along the triangle with labels $a,b,c$ that is dual to the vertex $v$. 
The minus sign coming from the theta accounts for the fact that we are 
working with exterior dihedral angles and these add an additional factor 
of $\pi$ when comparing the angles in a dissection. 
\end{proof}
The Euclidean volume also appears in the asymptotic expansion of the 
tetrahedral spin network, as part of the Stokes constants, see Section 
\ref{sub.tet}. However this does not generalize well to larger networks 
since the volumes do not add under the triangle formula.
In the appendix we will see a less trivial confirmation of the above 
conjecture for the cube spin network.

\section{Challenges and future directions}
\lbl{sub.challenge}
In this section we list some challenges and future directions.
Our first problem concerns a bound on the unitary evaluations.
\begin{problem}
\lbl{prob.1}
Show that the unitary evaluation of a spin network $(\Ga,\ga)$ satisfies
$$
|\la \Ga, \ga \ra^U| \leq 1
$$
\end{problem}
This problem may be solved using unitarity and locality in a way similar
to the proof that the Reshetikhin-Turaev invariants of a closed 3-manifold
grow at most polynomially with respect to the level; see \cite[Thm.2.2]{Ga5}.
Our next problem is a version of the Volume Conjecture for classical spin 
networks with all
edges colored by $2$.
Problem \ref{prob.1} also suggests that the growth rates must be $\leq 1$. 
In case $\ga = 2n$ more seems to be true.
\begin{problem}
\lbl{prob.2}
The growth rates of the sequence $\la \Ga, 2n \ra^U$ are on the unit circle.
\end{problem}
A positive solution to this problem is known for the following
ribbon graphs: the $\Theta$, the tetrahedron, the 3-faced prism and more 
generally for the infinite family of drums; see \cite{Abde}.
More generally one may pose the following
\begin{problem}
\lbl{prob.3}
Give a geometric meaning to the set of growth rates of a spin network.
\end{problem} 
We have formulated Conjecture \ref{conj.mc} as a partial answer to this 
question but 
that concerns only a single special growth rate among many.
Along the same lines one may ask for an interpretation of the rest of the 
asymptotic expansion.
Looking at the case of the $6j$-symbol it seems reasonable to consider the 
number field $K_{\Ga,\ga}$ generated
by the coefficients of the power series $h_{\l,\a,\b}$ in the Nilsson type asymptotic 
expansion of $\la \Ga,n\ga\ra^U$. 
\begin{problem}
\lbl{prob.5}
Give a geometric interpretation of the number field $K_{\Ga,\ga}$ of a 
spin network $(\Ga,\ga)$.
\end{problem}
Also the Stokes constant might have a geometric meaning as it does in the 
case of the tetrahedron.
\begin{problem}
\lbl{prob.6}
Give a geometric meaning to the Stokes constants of the sequence
$\la\Ga,n\ga\ra^U$. 
\end{problem}
The next problem is a computational challenge to all the known asymptotic
methods, and shows their practical limitations.
\begin{problem}
\lbl{prob.4}
Compute the asymptotics of the evaluation $\la K_{3,3}, 2n\ra$ (given
explicitly in Proposition \ref{prop.K33}) and $\la \text{Cube}, 2n\ra$. 
\end{problem}
The next problem is formulated by looking at the examples from Section 
\ref{sub.tet}. 
\begin{problem}
\lbl{prob.7}
Prove that for every coloring $\ga$ of the tetrahedron spin network
$(\psdraw{tetra}{0.18in},\ga)$, the sequence 
$\la \psdraw{tetra}{0.18in},n \ga \ra$ satisfies a second order recursion
relation with coefficients polynomials in $n$. Can you compute the
coefficients of this recursion from $\ga$ alone?
\end{problem}
Let us end this section with a remark. The main results of our paper can be 
extended to evaluations of spin networks corresponding to higher rank 
Lie groups. This will be discussed in a later publication.
\appendix

\section{Asymptotics of the regular cube}
\lbl{sec.app1}

\centerline{\it by Don Zagier} \bigskip

We give the asymptotic expansion of the standard evaluation $a_n$ of the 
1-skeleton of the 3-dimensional cube,
with all edges colored by $2n$. Proposition \ref{prop.drum} implies that 
$(a_n)$ is given by
\begin{equation} 
\lbl{eq.ancube} 
a_n=\sum_{k=0}^{2n}\,(2k+1)\,a_{n,k}^4 
\end{equation}
with
$$ 
a_{n,k} = \sum_j (-1)^j \binom{k}{j-3n}^2 \binom{2n-k}{4n-j} 
\binom{j+1}{2n+k+1}\,,
$$
making it clear that the numbers $(a_n)$ are integral and positive. The first 
few values of $a_n$ are given by
$$ 
\begin{array}{l} a_0 \= 1\,, \\ a_1 \= 6144\,, \\ a_2 \= 505197000\,, 
\\ a_3 \= 77414400000000\,, \\ a_4 \= 13937620296600000000\,, \\
a_5 \= 3685480142898164744060928\,, 
\\ a_6 \= 1038107879077276408534853271552\,, 
\\ a_7 \= 297223547548257752224492840550400000\,, 
\\ a_8 \= 104193297934159421485149830847575156250000\,, 
\\ a_9 \= 35577316035253000096415678610598379040000000000\,, \\
a_{10} \= 12357485751601160513255660198337121351402277161410240\,. 
\end{array} 
$$
\smallskip
We first look for a recursion of the form
\begin{equation} 
\lbl{eq.guessrecan} 
\sum_{j=0}^J c_j(n)\,a_{n+j}\=0 
\end{equation}
with $J$ not too large and $c_j(n)$ polynomials of $n$ of some not too 
large degree~$d$. Using the first few hundred values of $(a_n)$,
we find experimentally a recursion of this form with $J=4$, $d=61$ and 
with $c_j(n)$ given by
$$ 
\begin{array}{l} 
c_0(n) \= \phantom{-} 3^{16} \, (2n + 7) \, (3n + 2)^8 \, (3n + 4)^8 \, (3n + 5)^7 \, (3n + 7)^7 \, (3n + 8) \, (3n + 10) \, (4n + 3) \, (4n + 5) \, P_0(n+3)\,, 
\\
c_1(n) \= -\, 2 \cdot 3^8 \, (n + 1)^5 \, (2n + 3) \, (2n + 7) \, (3n + 5)^7 \, (3n + 7)^7 \, (3n + 8) \, (3n + 10) \, P_1(n)\,, 
\\
c_2(n) \= -\, 2 \cdot 3^4 \, (n + 1)^5 \, (n+2)^7 \, (2n + 5) \, (3n + 8) \,(3n + 10) \, P_2(n)\,, 
\\
c_3(n) \= -\,2 \cdot (n + 1)^5 \, (n + 2)^7 \, (n + 3)^9 \, (2n + 3) \, (2n + 7) \, P_1(-n-5)\,, 
\\
c_4(n) \= \phantom{-} (n + 1)^5 \, (n + 2)^7 \, (n + 3)^9 \, (n + 4)^{11} \, (2n + 3) \, (4n + 15) \, (4n + 17) \, P_0(n+2)\,, 
\end{array} 
$$
where $P_0$, $P_1$ and $P_2$ are irreducible polynomials (normalized to have 
integral coefficients with no common factor) with leading terms
\begin{eqnarray*}
P_0(n) &=& 2^{11}\,3^7 \, 5^5 \, 7 \cdot 23^5 \, 47 \, 
\bigl(n^{26} + \,\text O\bigl(n^{24}\bigr)\bigr)\,, \\
P_1(n) &=& 2^{15} \, 3^7 \, 5^5 \, 7^3 \, 23^5 \, 47^3 \, 
\bigl(n^{38} + 94\,n^{37} + \,\text O\bigl(n^{36}\bigr)\bigr)\,, \\
P_2(n) &=& 2^{15} \, 3^{13} \, 5^5 \, 7 \cdot 19 \cdot 23^5 \, 
47 \cdot 71 \cdot 73 \, \bigl(n^{46} + 115\,n^{45} 
+ \,\text O\bigl(n^{44}\bigr)\bigr) 
\end{eqnarray*}
as $n\to\infty$, and with the polynomial $P_0$ being even. The full 
values are given at the end of the appendix.
\smallskip
To analyze the asymptotics of the solutions of the above recursion, we 
will use the standard Frobenius theory (see e.g.~\cite{Miller,O,Wasow,WZ2}). 
If $C_j$ denotes the top coefficient of the polynomial $c_j(n)$, then we 
find that $\sum_{j=0}^4 C_j\l^j$ factors as 
$(\l - 3^{12})^2\,(\l\,-\,(1+\sqrt{-2})^{24})\,(\l\,-\,(1-\sqrt{-2})^{24})$ 
and that the indicial equation of the root $3^{12}$ has a double root at 
$-9/2$, while the indicial equations of the roots $(1\pm\sqrt{-2})^{24}$ 
both have root $-4$. This implies that $(a_n)$ has an asymptotic expansion
\begin{equation} 
\lbl{eq.cubeexp}
a_n \;\sim\; S_0 \,\frac{3^{12n}}{n^4}\,
\biggl( (\log n + c)\,M_1\Bigl(\frac1n\Bigr) \+ M_2\Bigl(\frac1n\Bigr)\biggr)
\;+\;\Re\biggl( S_1\,
\frac{\bigl(1+\sqrt{-2}\bigr)^{24n}}{n^{9/2}}\,M_3\Bigl(\frac1n\Bigr)\biggr) 
\end{equation}
for some constants $S_0,\,c\in\BR$, $S_1\in\BC$ and power series 
$M_1(x),\;M_2(x)\in\BQ[[x]]$, $\;M_3(x)\in\BQ[\sqrt{-2}][[x]]$,
normalized by requiring that $M_1$ and $M_3$ have constant term~1. 
Notice that the three roots $3^{12}$, $(1\pm\sqrt{-2})^{24}$
have the same absolute value, so that the different terms of this 
expansion all have the same order of magnitude up to powers of~$n$.
Using the acceleration method\footnote{This method is equivalent to the 
Richardson transform, explained in detail in \cite[Sec.5.2]{GIKM}, and 
also in \cite{BO}.}
described at \cite[p.954]{Za1} and in Section 4 of \cite{GMoree}, 
applied to the values of $a_n$ for $n=1000,\dots,1050$, we find
the numerical values of the constants $S_j$ and $c$ and the first few 
coefficients of the power series $M_i$. The former are then recognized as 
\begin{equation} 
\lbl{eq.S123}
S_0\=\frac{3^5}{2^4\pi^6}\,, \qquad c\=\frac74 \log 2\+\log 3\+\ga\,, 
\qquad S_1\=\frac{(1+i)\,(1+\sqrt{-2})^{12}}{2^{31/4}\,\pi^{11/2}},
\end{equation}
and the latter as
\begin{eqnarray*}
M_1(x) &=& 1 \,-\, \frac{14}9\,x \+ \frac{419}{324}\,x^2 \,-\,
\frac{5659}{8748}\,x^3 \+ \frac{84769}{629856}\,x^4 \,\ldots\,, \\
M_2(x) &=& \phantom{0\,-\,}
\quad \frac12\,x \,-\, 
\frac{689}{864}\,x^2 \+ \frac{4771}{7776}\,x^3 \,-\,
\frac{3799441}{22394880}\,x^4 \+ \ldots\,, \\
M_3(x) &=& 1 \,-\, \frac{2080-43\sqrt{-2}}{1152}\,x 
\+ \frac{1985023-114208\sqrt{-2}}{1327104} x^2 \+ \ldots\,. 
\end{eqnarray*}
The acceleration method can give many more terms, but it is easier to 
simply substitute the Ansatz~\eqref{eq.cubeexp}
into the recursion for the $a_n$, thus obtaining as many terms as desired. 
The approximation works very well in practice,
e.g., the maximal relative error between $a_n$ and the right-hand side 
of~\eqref{eq.cubeexp} with 50 terms of the
power series $M_j(1/n)$ is about one part in $10^{105}$ for $n$ between 
$900$ and $1000$. To first order, the above formulas say that
the asymptotics of $a_n$ are given by
$$ a_n \;\sim\; 3^{12n+5}\, \frac{\log(2^{7/4} 3n) \+ \ga 
\+ \text O(1/\sqrt{n})}{\pi^6\,(2n)^4}\;. $$
We end by giving the complete values of the polynomials $P_j(n)$ that 
appear in the recursion relation: 
{\tiny 
\begin{equation*} 
\begin{split}
P_0(n) \=& 29639019676089600000 \,n^{26} - 150687090646682256000 \,n^{24}+306650022810104871540 \,n^{22} \\ 
& - 331831776907297971277 \,n^{20} + 219414205267920364521 \,n^{18} - 96826696589802950226 \,n^{16} \\
& + 29683042452642732342 \,n^{14} - 6233837158945489065 \,n^{12} + 868763697226715493 \,n^{10} - 77173811768742984 \,n^8 \\
& + 4094153904684504 \,n^6 - 111886799053248 \,n^4 + 797085625600\,n^2 + 17508556800 \,,
\end{split} 
\end{equation*} 

\begin{equation*} 
\begin{split}
P_1(n) \= & 51330514060153830297600000 \,n^{38} + 4825068321654460047974400000 \,n^{37} \\ 
& + 219957552931873414824036864000 \,n^{36} + 6478694077195677171946040064000 \,n^{35} \\ 
& + 138596018058877517667573746466240 \,n^{34} + 2295022658488679405177615124025920 \,n^{33} \\
& + 30614929984046498162519595722728508 \,n^{32} + 338072087836667419737764233439922530 \,n^{31} \\
& + 3151590998989517431768295237323718623 \,n^{30} + 25169023605885819585932158912744414906 \,n^{29} \\
& + 174146716308878486922546565722791225448 \,n^{28} + 1053195250756920493731804102357697945572 \,n^{27} \\
& + 5606361750518381240594997946959656401095 \,n^{26} + 26414736794861925209673962053754850002124 \,n^{25} \\
& + 110642699366898526542975775645886257667832 \,n^{24} + 413447345600050228521136991970449404260966 \,n^{23} \\
& + 1381980145537336658418260176761712507602933 \,n^{22} + 4140268295003002172648827386155584658850114 \,n^{21} \\
& + 11132423733718852590472537735822272877436592 \,n^{20} + 26885849594024541421613060268809580068669016 \,n^{19} \\
& + 58334970199614352499186715966601305101299773 \,n^{18} + 113675657006866049496120543251199160823538984 \,n^{17} \\
& + 198774677991170902825182509342222362759066932 \,n^{16} + 311443610656870193242629490576780944439111836 \,n^{15} \\
& + 436334419544283264503767964716530868856380648 \,n^{14} + 545097336381579864877890591441121830311242864 \,n^{13} \\
& + 605044458481431111735014250352650750979996544 \,n^{12} + 594007313574579683774145689072368182659197376 \,n^{11} \\
& + 512886129222805060276163096656047277821413760 \,n^{10} + 386709368743514690018446501443764021880730368 \,n^9 \\
& + 252342374226937131766477472379392715246649344 \,n^8 + 140888462647571785365811030970706748098760704 \,n^7 \\
& + 66312852042204808325857346405562441033596928 \,n^6 + 25796733254687036537088890539848097231134720 \,n^5 \\
& + 8069974595385074631605661061376909102284800 \,n^4 + 1950251347843211319569463651786279088128000 \,n^3 \\
& + 341555150844826683309630400427989401600000 \,n^2 + 38554163497112285346472887524366745600000 \,n \\
& + 2104728968892765569954334578933760000000\,, \phantom{n^9} 
\end{split} 
\end{equation*} 

\begin{equation*} 
\begin{split} 
P_2(n) \= & 34044436942851501889228800000 \,n^{46} + 3915110248427922717261312000000 \,n^{45} \\
& + 219662706791565782848926392832000 \,n^{44} + 8013067972307054904678991211520000 \,n^{43} \\
& + 213693214418431619419298087215485120 \,n^{42} + 4441366173640824819720536004281937600 \,n^{41} \\
& + 74894384718928871397218606262165524844 \,n^{40} + 1053306557874097263334396534371665684400 \,n^{39} \\
& + 12603933341218324699775023159567066766967 \,n^{38} + 130270526199929371052793324163523141276865 \,n^{37} \\
& + 1176678761157600477488177183321806479261195 \,n^{36} + 9375125430920826953262133708501549064882175 \,n^{35} \\
& + 66383537873561799570407105177080643368970738 \,n^{34} + 420307309950545627790271278025374052312931480 \,n^{33} \\
& + 2391581100396727961084015883784259869439400176 \,n^{32} + 12280714697778331715472758947513061657620945580 \,n^{31} \\
& + 57105561410677181109714147899130291409292461050 \,n^{30} + 241147151880727945569092590718565623964677498750 \,n^{29} \\
& + 926919151084739302148924528551196602048329414090 \,n^{28} + 3249130587428846232389342739110375794038903230050 \,n^{27} \\
& + 10401395584522433137223729045847274251204938831280 \,n^{26} + 30443078183785042173235936392545801110737278600700 \,n^{25} \\
& + 81523471295041101121702148739822166982527381249680 \,n^{24} + 199828567976168135158731196334605133971159536918300 \,n^{23} \\
& + 448394647578816808473555576099796146381767002557015 \,n^{22} + 920880328621349858197546061838422939683875830250825 \,n^{21} \\
& + 1730060383317082970163176773077295571528776228823811 \,n^{20} + 2970768265449411986606322605605748766920711781281175 \,n^{19} \\
& + 4657049644272293081174713334539070013552116895070038 \,n^{18} + 6654448413726919814684796853369712557012807014275460 \,n^{17} \\
& + 8650083204510826813891208265741376798744454496660220 \,n^{16} + 10204350936081623227151423121609083529329974289414800 \,n^{15} \\
& + 10892578929911563608834673775127900025242488747892352 \,n^{14} + 10483723584809889429623667272318319527058201296732320 \,n^{13} \\
& + 9059104715506400545606048153659029802430897952414784 \,n^{12} + 6992047378850409545281115023066707881078577900939520 \,n^{11} \\
& + 4790327226399433431900479655636468590589446522365440 \,n^{10} + 2891085435841974142824330005531725765542027596096000 \,n^9 \\
& + 1522663617068026467557018527073299284644425562419200 \,n^8 + 691604038146644417153727841870625107738881053184000 \,n^7 \\
& + 266822516520374901619621632554668192771563340800000 \,n^6 + 85697330246713152918864338753344668302045030400000 \,n^5 \\
& + 22287663074878779648397028175676823501117952000000 \,n^4 + 4507926265942350227509023190826979902899200000000 \,n^3 \\
& + 665045895624929293830598512362684841984000000000 \,n^2 + 63635282511747835599280673505876676608000000000 \,n \\
& + 2962949736504660768760707593463767040000000000\;. \phantom{n^9} 
\end{split} 
\end{equation*} 
}

\section{Further comments on the asymptotics of the regular cube}
\lbl{sec.furthercube}

The guessed recursion relation of the sequence $(a_n)$ from the previous 
section agrees with the result of the independent 
guessing program {\tt Guess} of 
Kauers; \cite{Ka1,Ka2}. The recursion for $(a_n)$ was verified 
for $n=0,\dots,2996$,
where the height (i.e., the number of digits) of $a_{3000}$ is $17162$. 
On the other hand, the coefficients of the polynomials 
$c_k(n)$ are integers with a much smaller height $73$.
In addition, the root $\l_1$ of the characteristic polynomial can be
written in the form:
$$
\l_1=(1+i \sqrt{2})^{24}
=3^{12} e^{12 i \arccos(-1/3)}
$$
where $e^{12 i \arccos(-1/3)}$ is the exponentiated total mean curvature of the
regular Euclidean octahedron (dual to the regular Euclidean cube)
with unit sides. This confirms Conjecture \ref{conj.mc} on the asymptotics of 
evaluations of classical spin networks. The factor $3^12$ comes from the fact that we are considering
the standard normalization and not the unitary one as is done in the Conjecture.

The asymptotic expansion \eqref{eq.cubeexp} is clearly of Nilsson type,
with the presence of logarithms, and Stokes constants which are no longer
algebraic, up to rational powers of $\pi$. This makes it unlikely that stationary phase type methods
will be able to obtain the asymptotic expansion for the regular cube.

\bibliographystyle{hamsalpha}\bibliography{biblio}
\end{document}